\documentclass[12pt]{article}

\usepackage{amsmath,amsthm,amssymb,amsfonts}
\usepackage{mathrsfs}
\usepackage{enumitem}
\usepackage[margin=2.5cm]{geometry}
\usepackage{graphicx}
\usepackage{tikz}
\usetikzlibrary{arrows.meta,positioning}
\usepackage{hyperref}
\hypersetup{colorlinks=true,linkcolor=blue,citecolor=blue,urlcolor=blue}

\theoremstyle{plain}
\newtheorem{theorem}{Theorem}[section]

\newtheorem{proposition}[theorem]{Proposition}
\newtheorem{corollary}[theorem]{Corollary}

\theoremstyle{definition}
\newtheorem{definition}[theorem]{Definition}
\newtheorem{example}[theorem]{Example}

\theoremstyle{remark}
\newtheorem{remark}[theorem]{Remark}

\newtheorem{question}[theorem]{Question}

\newcommand{\aura}{\mathfrak{a}}
\newcommand{\cla}{\operatorname{cl}_{\aura}}
\newcommand{\inta}{\operatorname{int}_{\aura}}
\newcommand{\cl}{\operatorname{cl}}
\newcommand{\inte}{\operatorname{int}}
\newcommand{\taua}{\tau_{\aura}}
\newcommand{\R}{\mathbb{R}}

\newcommand{\powerset}{\mathcal{P}}
\newcommand{\clsa}[1][]{\operatorname{cl}^{*#1}_{\aura}}
\newcommand{\intsa}[1][]{\operatorname{int}^{*#1}_{\aura}}
\newcommand{\tausa}[1][]{\tau^{*#1}_{\aura}}
\newcommand{\cls}{\operatorname{cl}^{*}}
\newcommand{\taus}{\tau^{*}}
\newcommand{\psia}{\psi_{\aura}}

\title{\textbf{Ideal-Aura Topological Spaces, New Local Functions,\\ and Generalized Open Sets}}

\author{Ahu A\c{c}{\i}kg\"{o}z$^{1,*}$ and Murad \"{O}zko\c{c}$^{2}$\\[10pt]
\small $^{1}$Department of Mathematics, Balikesir University,\\
\small Cagis Campus, 10145, Balikesir, Turkey\\
\small \texttt{ahuacikgoz@balikesir.edu.tr}\\[6pt]
\small $^{2}$Department of Mathematics, Mu\u{g}la S{\i}tk{\i} Ko\c{c}man University,\\
\small 48000, Mu\u{g}la, Turkey\\
\small \texttt{muradozkoc@mu.edu.tr}\\[6pt]
\small $^{*}$Corresponding author}

\date{}

\begin{document}

\maketitle

\begin{abstract}
In this paper, we introduce the concept of an \emph{ideal-aura topological space} $(X, \tau, \mathcal{I}, \aura)$ by combining an ideal topological space $(X, \tau, \mathcal{I})$ with a scope function $\aura: X \to \tau$ satisfying $x \in \aura(x)$ for every $x \in X$. We define the \emph{aura-local function} $A^{\aura}(\mathcal{I}) = \{x \in X : \aura(x) \cap A \notin \mathcal{I}\}$ and prove that it extends the classical Jankovi\'{c}--Hamlett local function via the fundamental inclusion $A^{*}(\mathcal{I}, \tau) \subseteq A^{\aura}(\mathcal{I})$. We show that the associated closure operator $\clsa(A) = A \cup A^{\aura}(\mathcal{I})$ is an additive \v{C}ech closure operator that is generally \emph{not} idempotent, and we prove that idempotency holds precisely when $\aura$ is transitive. The \v{C}ech topology $\tausa$ generated by $\clsa$ satisfies the inclusion chain $\taua \subseteq \tausa \subseteq \taus$, revealing that the ideal-aura topology lies strictly between the pure aura topology and the Jankovi\'{c}--Hamlett topology. The $\psia$-operator is introduced and shown to provide an alternative characterization of $\tausa$. Five classes of $\mathcal{I}\aura$-generalized open sets ($\mathcal{I}\aura$-open, $\mathcal{I}\aura$-semi-open, $\mathcal{I}\aura$-pre-open, $\mathcal{I}\aura$-$\alpha$-open, and $\mathcal{I}\aura$-$\beta$-open) are introduced and their complete hierarchy is established with strict counterexamples. Decomposition theorems for $\mathcal{I}\aura$-continuity are proven. Three special cases ($\mathcal{I} = \{\emptyset\}$, $\mathcal{I} = \mathcal{I}_f$, and $\mathcal{I} = \powerset(X)$) are analyzed, recovering the pure aura topology, a localized aura structure, and the discrete topology, respectively. Counterexamples are provided on both finite sets and the real line throughout.
\end{abstract}

\noindent\textbf{Keywords:} Ideal topological space; aura topological space; aura-local function; \v{C}ech closure operator; $\psi_{\aura}$-operator; $\mathcal{I}\aura$-generalized open sets; decomposition of continuity.

\medskip

\noindent\textbf{2020 Mathematics Subject Classification:} 54A05, 54A10, 54C08, 54E99.

\section{Introduction}\label{sec:intro}

The interaction between topological structures and auxiliary set-theoretic objects has been a rich source of new concepts in general topology. Among the most influential examples is the theory of ideal topological spaces, initiated by Kuratowski \cite{Kuratowski1966} and systematically developed by Vaidyanathaswamy \cite{Vaidyanathaswamy1945}, Jankovi\'{c} and Hamlett \cite{Jankovic1990}, and Newcomb \cite{Newcomb1967}. In an ideal topological space $(X, \tau, \mathcal{I})$, the local function
\[
A^{*}(\mathcal{I}, \tau) = \{x \in X : O \cap A \notin \mathcal{I} \text{ for every } O \in \tau(x)\}
\]
generates a Kuratowski closure operator $\cls(A) = A \cup A^{*}(\mathcal{I}, \tau)$ and a finer topology $\taus = \{A \subseteq X : \cls(X \setminus A) = X \setminus A\}$ satisfying $\tau \subseteq \taus$. This framework has inspired extensive research on $\mathcal{I}$-open sets \cite{Jankovic1990}, $\mathcal{I}$-continuous functions \cite{Acikgoz2009}, and their applications to decompositions of continuity \cite{Acikgoz2004,Hatir2005}.

In \cite{Acikgoz2026aura}, we introduced a fundamentally new auxiliary structure: the \emph{aura topological space} $(X, \tau, \aura)$, where $\aura: X \to \tau$ is a scope function satisfying $x \in \aura(x)$ for every $x \in X$. This structure assigns each point a fixed observational range and is categorically different from ideals, filters, grills, and primals (which are all subcollections of $\powerset(X)$). The aura-closure operator $\cla(A) = \{x \in X : \aura(x) \cap A \neq \emptyset\}$ was shown to be an additive \v{C}ech closure operator, and the topology $\taua = \{A \subseteq X : \aura(x) \subseteq A \text{ for all } x \in A\}$ was established with $\taua \subseteq \tau$. In \cite{Acikgoz2026aura2}, the theories of compactness, connectedness, and products were developed for this setting.

The present paper establishes a bridge between these two theories by combining an ideal $\mathcal{I}$ with a scope function $\aura$ in a single structure $(X, \tau, \mathcal{I}, \aura)$. The guiding observation is this: the classical local function $A^{*}(\mathcal{I}, \tau)$ tests \emph{every} open neighborhood of $x$, which is a universal quantification over $\tau(x)$. In contrast, our \emph{aura-local function} $A^{\aura}(\mathcal{I})$ tests only the \emph{single} fixed neighborhood $\aura(x)$. This substitution has deep consequences: the resulting closure operator is finer than the pure aura closure but generally fails to be idempotent, producing a \v{C}ech closure that requires transfinite iteration to reach a Kuratowski closure. The topology $\tausa$ generated by $\clsa$ sits in the chain
\[
\taua \subseteq \tausa \subseteq \taus,
\]
providing a natural interpolation between the pure aura topology and the Jankovi\'{c}--Hamlett topology.

The paper is organized as follows. Section~\ref{sec:prelim} recalls the necessary background. Section~\ref{sec:local} introduces the aura-local function and establishes its fundamental properties. Section~\ref{sec:closure} develops the \v{C}ech closure operator $\clsa$ and the topology $\tausa$. Section~\ref{sec:psi} introduces the $\psia$-operator. Section~\ref{sec:genopen} defines five classes of $\mathcal{I}\aura$-generalized open sets and proves their hierarchy. Section~\ref{sec:continuity} develops $\mathcal{I}\aura$-continuity and its decompositions. Section~\ref{sec:special} analyzes three important special cases. Section~\ref{sec:conclusion} concludes with open problems.

\section{Preliminaries}\label{sec:prelim}

Throughout this paper, $(X, \tau)$ denotes a topological space, $\cl(A)$ and $\inte(A)$ denote the closure and interior of $A$ in $(X, \tau)$, and $\tau(x)$ denotes the collection of all open neighborhoods of $x$.

\begin{definition}[\cite{Kuratowski1966}]\label{def:ideal}
A non-empty collection $\mathcal{I} \subseteq \powerset(X)$ is called an \emph{ideal} on $X$ if:
\begin{enumerate}[label=(\roman*)]
    \item $A \in \mathcal{I}$ and $B \subseteq A$ imply $B \in \mathcal{I}$ (hereditary);
    \item $A \in \mathcal{I}$ and $B \in \mathcal{I}$ imply $A \cup B \in \mathcal{I}$ (finite additivity).
\end{enumerate}
The triple $(X, \tau, \mathcal{I})$ is called an \emph{ideal topological space}.
\end{definition}

\begin{example}\label{ex:ideals}
Common examples of ideals include:
\begin{enumerate}[label=(\alph*)]
    \item $\{\emptyset\}$, the trivial ideal.
    \item $\mathcal{I}_f = \{A \subseteq X : A \text{ is finite}\}$, the ideal of finite subsets.
    \item $\mathcal{I}_c = \{A \subseteq X : A \text{ is countable}\}$, the ideal of countable subsets.
    \item $\mathcal{N} = \{A \subseteq X : \inte(A) = \emptyset\}$, the ideal of nowhere dense subsets.
    \item $\powerset(X)$, the improper ideal.
\end{enumerate}
\end{example}

\begin{definition}[\cite{Jankovic1990}]\label{def:local-function}
Let $(X, \tau, \mathcal{I})$ be an ideal topological space. For $A \subseteq X$, the \emph{local function} of $A$ with respect to $\mathcal{I}$ and $\tau$ is defined by
\[
A^{*}(\mathcal{I}, \tau) = \{x \in X : O \cap A \notin \mathcal{I} \text{ for every } O \in \tau(x)\}.
\]
When $\mathcal{I}$ and $\tau$ are clear from the context, we write $A^{*}$ for $A^{*}(\mathcal{I}, \tau)$.
\end{definition}

\begin{theorem}[\cite{Jankovic1990}]\label{thm:star-properties}
Let $(X, \tau, \mathcal{I})$ be an ideal topological space. The following properties hold for all $A, B \subseteq X$:
\begin{enumerate}[label=(\roman*)]
    \item $\emptyset^{*} = \emptyset$;
    \item $A \subseteq B$ implies $A^{*} \subseteq B^{*}$;
    \item $(A \cup B)^{*} = A^{*} \cup B^{*}$;
    \item $A^{*} = \cl(A^{*})$ (i.e., $A^{*}$ is closed);
    \item $(A^{*})^{*} \subseteq A^{*}$.
\end{enumerate}
\end{theorem}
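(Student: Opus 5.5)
The plan is to work directly from Definition~\ref{def:local-function} and the two axioms of Definition~\ref{def:ideal}. Each of the five items reduces to an elementary argument about open neighborhoods.

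For (i), $\emptyset\in\mathcal{I}$, because $\mathcal{I}$ is non-empty and hereditary. Hence $O\cap\emptyset=\emptyset\in\mathcal{I}$ for every open $O$, so no point lies in $\emptyset^{*}$.

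For (ii), take $x\in A^{*}$ and $O\in\tau(x)$. If $O\cap B$ were in $\mathcal{I}$, then heredity would put $O\cap A\subseteq O\cap B$ in $\mathcal{I}$, a contradiction. So $x\in B^{*}$.

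For (iii), the inclusion $A^{*}\cup B^{*}\subseteq(A\cup B)^{*}$ follows from (ii). For the reverse inclusion I argue by contrapositive. Suppose $x\notin A^{*}\cup B^{*}$. Then there are $O_1,O_2\in\tau(x)$ with $O_1\cap A\in\mathcal{I}$ and $O_2\cap B\in\mathcal{I}$. Set $O=O_1\cap O_2\in\tau(x)$. Heredity and finite additivity give
\[
O\cap(A\cup B)\subseteq(O_1\cap A)\cup(O_2\cap B)\in\mathcal{I},
\]
so $x\notin(A\cup B)^{*}$. This is the only step that needs both ideal axioms together with closure of $\tau(x)$ under finite intersections.

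For (iv), I show that $X\setminus A^{*}$ is open. If $x\notin A^{*}$, choose $O\in\tau(x)$ with $O\cap A\in\mathcal{I}$. Every $y\in O$ has $O\in\tau(y)$, so $y\notin A^{*}$. Thus $O\subseteq X\setminus A^{*}$, and $A^{*}=\cl(A^{*})$.

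For (v), I first prove the auxiliary fact that $A^{*}\subseteq\cl(A)$. Indeed, if $x\notin\cl(A)$, some $O\in\tau(x)$ has $O\cap A=\emptyset\in\mathcal{I}$, so $x\notin A^{*}$. Applying this fact to the set $A^{*}$ gives $(A^{*})^{*}\subseteq\cl(A^{*})$, and by (iv) this equals $A^{*}$.

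No step is a genuine obstacle. The only places needing care are the use of the intersection neighborhood in (iii) and the ordering in (v): the auxiliary inclusion $A^{*}\subseteq\cl(A)$ must be established before it is combined with (iv).
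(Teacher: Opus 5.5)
Your proof is correct: each of (i)--(v) follows directly from the definition of $A^{*}$ and the two ideal axioms, and you handle the two delicate points properly, namely the intersection neighbourhood $O_1\cap O_2$ in (iii) and proving $A^{*}\subseteq\cl(A)$ before combining it with (iv) in (v). The paper gives no proof of its own and simply cites Jankovi\'{c}--Hamlett; your argument is the standard one for this result.
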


\begin{definition}[\cite{Jankovic1990}]\label{def:star-closure}
The \emph{$*$-closure} of $A$ is defined by $\cls(A) = A \cup A^{*}$. The operator $\cls$ is a Kuratowski closure operator on $X$, and the topology
\[
\taus = \taus(\mathcal{I}) = \{A \subseteq X : \cls(X \setminus A) = X \setminus A\}
\]
satisfies $\tau \subseteq \taus$.
\end{definition}

\begin{definition}[\cite{Jankovic1990}]\label{def:psi}
The operator $\psi: \powerset(X) \to \tau$ is defined by
\[
\psi(A) = \{x \in X : \exists\, O \in \tau(x) \text{ such that } O \setminus A \in \mathcal{I}\} = X \setminus (X \setminus A)^{*}.
\]
A set $A$ is $\taus$-open if and only if $A \subseteq \psi(A)$.
\end{definition}

\begin{definition}[\cite{Acikgoz2026aura}]\label{def:aura}
Let $(X, \tau)$ be a topological space. A function $\aura: X \to \tau$ satisfying $x \in \aura(x)$ for every $x \in X$ is called a \emph{scope function} (or \emph{aura function}). The triple $(X, \tau, \aura)$ is called an \emph{aura topological space} (an $\aura$-\emph{space}).
\end{definition}

\begin{definition}[\cite{Acikgoz2026aura}]\label{def:aura-ops}
Let $(X, \tau, \aura)$ be an $\aura$-space. For $A \subseteq X$:
\begin{enumerate}[label=(\roman*)]
    \item The \emph{aura-closure}: $\cla(A) = \{x \in X : \aura(x) \cap A \neq \emptyset\}$.
    \item The \emph{aura-interior}: $\inta(A) = \{x \in A : \aura(x) \subseteq A\}$.
    \item $A$ is \emph{$\aura$-open} if $A = \inta(A)$, i.e., $\aura(x) \subseteq A$ for all $x \in A$.
    \item The collection of $\aura$-open sets is $\taua = \{A \subseteq X : \aura(x) \subseteq A \text{ for all } x \in A\}$, which is a topology satisfying $\taua \subseteq \tau$.
\end{enumerate}
\end{definition}

\begin{definition}[\cite{Acikgoz2026aura}]\label{def:transitive}
A scope function $\aura$ is called \emph{transitive} if $y \in \aura(x)$ implies $\aura(y) \subseteq \aura(x)$ for all $x, y \in X$.
\end{definition}

\section{The Aura-Local Function}\label{sec:local}

We now introduce the central construction of this paper.

\begin{definition}\label{def:ideal-aura}
Let $(X, \tau)$ be a topological space, $\mathcal{I}$ an ideal on $X$, and $\aura: X \to \tau$ a scope function. The quadruple $(X, \tau, \mathcal{I}, \aura)$ is called an \emph{ideal-aura topological space} (briefly, an $\mathcal{I}\aura$-\emph{space}).
\end{definition}

\begin{definition}\label{def:aura-local}
Let $(X, \tau, \mathcal{I}, \aura)$ be an $\mathcal{I}\aura$-space. For $A \subseteq X$, the \emph{aura-local function} of $A$ is defined by
\begin{equation}\label{eq:aura-local}
    A^{\aura}(\mathcal{I}) = \{x \in X : \aura(x) \cap A \notin \mathcal{I}\}.
\end{equation}
When $\mathcal{I}$ is clear from context, we write $A^{\aura}$ for $A^{\aura}(\mathcal{I})$.
\end{definition}

\begin{remark}\label{rem:comparison}
The key difference between $A^{*}(\mathcal{I}, \tau)$ and $A^{\aura}(\mathcal{I})$ is structural:
\begin{itemize}
    \item The classical local function $A^{*}(\mathcal{I}, \tau)$ demands that $O \cap A \notin \mathcal{I}$ for \emph{every} open neighborhood $O$ of $x$ --- a universal quantification.
    \item The aura-local function $A^{\aura}(\mathcal{I})$ demands only that $\aura(x) \cap A \notin \mathcal{I}$ for the \emph{single} fixed neighborhood $\aura(x)$ --- a single evaluation.
\end{itemize}
Since checking one specific neighborhood is weaker than checking all neighborhoods, more points may survive the test, so we expect $A^{*} \subseteq A^{\aura}$ in general.
\end{remark}

\begin{theorem}\label{thm:star-subset-aura}
Let $(X, \tau, \mathcal{I}, \aura)$ be an $\mathcal{I}\aura$-space. For every $A \subseteq X$,
\[
A^{*}(\mathcal{I}, \tau) \subseteq A^{\aura}(\mathcal{I}).
\]
\end{theorem}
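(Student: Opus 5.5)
The plan is a direct element-chase: I will show that membership in $A^{*}(\mathcal{I}, \tau)$ forces membership in $A^{\aura}(\mathcal{I})$ by specializing the universal quantifier in Definition~\ref{def:local-function} to one particular neighborhood. Fix $A \subseteq X$ and let $x \in A^{*}(\mathcal{I}, \tau)$. By Definition~\ref{def:local-function}, $O \cap A \notin \mathcal{I}$ for every $O \in \tau(x)$.

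The key step is to check that $\aura(x)$ is an admissible choice of $O$. By Definition~\ref{def:aura}, $\aura(x) \in \tau$ and $x \in \aura(x)$, so $\aura(x) \in \tau(x)$. Instantiating the universal statement at $O = \aura(x)$ gives $\aura(x) \cap A \notin \mathcal{I}$. By Definition~\ref{def:aura-local}, this says exactly that $x \in A^{\aura}(\mathcal{I})$. Since $x$ was arbitrary, the inclusion follows.

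There is no genuine obstacle here. The only point needing care is that the scope function takes values in $\tau$ and satisfies $x \in \aura(x)$, so that $\aura(x)$ lies in $\tau(x)$. That is precisely the content of Definition~\ref{def:aura}.

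Remark~\ref{rem:comparison} anticipates this argument: the aura test is one instance of the family of tests defining $A^{*}$. No property of the ideal beyond its definition is used, and in particular neither hereditary nor additive closure is needed.
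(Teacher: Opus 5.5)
Your proof is correct and is essentially the paper's own argument: you fix $x \in A^{*}(\mathcal{I},\tau)$, note that $\aura(x) \in \tau(x)$ because $\aura$ takes values in $\tau$ with $x \in \aura(x)$, and instantiate the universal condition at $O = \aura(x)$. Your side remark that neither heredity nor finite additivity of $\mathcal{I}$ is needed is also accurate.
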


\begin{proof}
Let $x \in A^{*}(\mathcal{I}, \tau)$. Then $O \cap A \notin \mathcal{I}$ for every $O \in \tau(x)$. Since $\aura(x) \in \tau$ and $x \in \aura(x)$, we have $\aura(x) \in \tau(x)$. Therefore $\aura(x) \cap A \notin \mathcal{I}$, which gives $x \in A^{\aura}(\mathcal{I})$.
\end{proof}

\begin{example}\label{ex:strict-inclusion}
The inclusion in Theorem~\ref{thm:star-subset-aura} can be strict. Let $X = \{a, b, c\}$, $\tau = \{\emptyset, \{a\}, \{b\}, \{a,b\}, X\}$, $\mathcal{I} = \{\emptyset, \{c\}\}$, and $\aura(a) = \{a\}$, $\aura(b) = \{a,b\}$, $\aura(c) = X$.

For $A = \{a\}$:
\begin{itemize}
    \item $A^{\aura}$: $\aura(a) \cap A = \{a\} \notin \mathcal{I}$, so $a \in A^{\aura}$. $\aura(b) \cap A = \{a\} \notin \mathcal{I}$, so $b \in A^{\aura}$. $\aura(c) \cap A = \{a\} \notin \mathcal{I}$, so $c \in A^{\aura}$. Thus $A^{\aura} = X$.
    \item $A^{*}$: For $c$, the open neighborhood $X$ gives $X \cap A = \{a\} \notin \mathcal{I}$, but $c$ has only $X$ as an open neighborhood, so $c \in A^{*}$. For $b$, both $\{b\}$ and $\{a,b\}$ and $X$ are open neighborhoods; $\{b\} \cap \{a\} = \emptyset \in \mathcal{I}$, so $b \notin A^{*}$. Thus $A^{*} = \{a, c\} \subsetneq X = A^{\aura}$.
\end{itemize}
\end{example}

\begin{theorem}\label{thm:aura-local-properties}
Let $(X, \tau, \mathcal{I}, \aura)$ be an $\mathcal{I}\aura$-space. For all $A, B \subseteq X$:
\begin{enumerate}[label=(\roman*)]
    \item $\emptyset^{\aura} = \emptyset$;
    \item $A \subseteq B$ implies $A^{\aura} \subseteq B^{\aura}$ (monotonicity);
    \item $(A \cup B)^{\aura} = A^{\aura} \cup B^{\aura}$ (finite additivity);
    \item $A^{\aura} \subseteq \cla(A)$ (domination by aura-closure);
    \item If $\mathcal{I} = \{\emptyset\}$, then $A^{\aura} = \cla(A)$;
    \item If $\mathcal{I} = \powerset(X)$, then $A^{\aura} = \emptyset$ for all $A$;
    \item If $J \in \mathcal{I}$, then $(A \setminus J)^{\aura} = A^{\aura}$ when $\aura$ is transitive;
    \item $A^{\aura}(\mathcal{I}_1) \subseteq A^{\aura}(\mathcal{I}_2)$ whenever $\mathcal{I}_2 \subseteq \mathcal{I}_1$.
\end{enumerate}
\end{theorem}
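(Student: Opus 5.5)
The plan is to verify each item directly from Definition~\ref{def:aura-local} and the two ideal axioms of Definition~\ref{def:ideal}. Each item is a pointwise statement about the single set $\aura(x) \cap A$, so no topology beyond $\aura(x) \in \tau$ is needed. A preliminary fact used repeatedly is that $\emptyset \in \mathcal{I}$ for every ideal: $\mathcal{I}$ is non-empty, and by heredity any member contains $\emptyset$ as a subset.

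For (i), $\aura(x) \cap \emptyset = \emptyset \in \mathcal{I}$ for every $x$, so no point lies in $\emptyset^{\aura}$. For (ii), if $A \subseteq B$ and $\aura(x) \cap B \in \mathcal{I}$, then heredity gives $\aura(x) \cap A \in \mathcal{I}$; taking the contrapositive yields $A^{\aura} \subseteq B^{\aura}$.

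For (iii), the inclusion $\supseteq$ follows from (ii). For $\subseteq$, take $x \notin A^{\aura} \cup B^{\aura}$. Then $\aura(x) \cap A$ and $\aura(x) \cap B$ both lie in $\mathcal{I}$, so by finite additivity their union $\aura(x) \cap (A \cup B)$ lies in $\mathcal{I}$, and hence $x \notin (A \cup B)^{\aura}$.

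For (iv), if $\aura(x) \cap A \notin \mathcal{I}$ then $\aura(x) \cap A \neq \emptyset$, since $\emptyset \in \mathcal{I}$; thus $x \in \cla(A)$ by Definition~\ref{def:aura-ops}. For (v), when $\mathcal{I} = \{\emptyset\}$ the condition ``$\notin \mathcal{I}$'' reads exactly ``$\neq \emptyset$'', so $A^{\aura}$ coincides with $\cla(A)$. For (vi), when $\mathcal{I} = \powerset(X)$ every set belongs to $\mathcal{I}$, so no point satisfies the defining condition. For (viii), if $\mathcal{I}_2 \subseteq \mathcal{I}_1$ and $\aura(x) \cap A \notin \mathcal{I}_1$, then certainly $\aura(x) \cap A \notin \mathcal{I}_2$.

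The only item needing slight care is (vii), and I expect it to go through \emph{without} the transitivity hypothesis. The inclusion $(A \setminus J)^{\aura} \subseteq A^{\aura}$ is (ii). For the reverse, I would use the decomposition
\[
\aura(x) \cap A = \bigl(\aura(x) \cap (A \setminus J)\bigr) \cup \bigl(\aura(x) \cap A \cap J\bigr),
\]
where the second piece lies in $\mathcal{I}$ by heredity from $J \in \mathcal{I}$. Hence if $\aura(x) \cap (A \setminus J) \in \mathcal{I}$, then finite additivity gives $\aura(x) \cap A \in \mathcal{I}$. Contrapositively, $A^{\aura} \subseteq (A \setminus J)^{\aura}$. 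I would remark after the proof that transitivity of $\aura$ is therefore not actually used in (vii), so the statement holds for every scope function.
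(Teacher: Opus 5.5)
Your proposal is correct and follows the paper's proof item by item: the contrapositive use of heredity in (ii), finite additivity in (iii), $\emptyset \in \mathcal{I}$ in (iv), and in (vii) splitting $\aura(x) \cap A$ into the part meeting $A \setminus J$ and a part lying in $J$. Your remark that transitivity is never used in (vii) is accurate, since the paper's own argument does not invoke it either; moreover, your decomposition with $\aura(x) \cap A \cap J$ is an exact identity, whereas the paper's version with $\aura(x) \cap J$ is only an inclusion (harmless, by heredity).
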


\begin{proof}
\textbf{(i)} For any $x \in X$, $\aura(x) \cap \emptyset = \emptyset \in \mathcal{I}$, so $x \notin \emptyset^{\aura}$.

\textbf{(ii)} If $x \in A^{\aura}$, then $\aura(x) \cap A \notin \mathcal{I}$. Since $A \subseteq B$, we have $\aura(x) \cap A \subseteq \aura(x) \cap B$. By heredity of $\mathcal{I}$, if $\aura(x) \cap B \in \mathcal{I}$, then $\aura(x) \cap A \in \mathcal{I}$, a contradiction. So $\aura(x) \cap B \notin \mathcal{I}$, giving $x \in B^{\aura}$.

\textbf{(iii)} Since $\aura(x) \cap (A \cup B) = (\aura(x) \cap A) \cup (\aura(x) \cap B)$, we have $\aura(x) \cap (A \cup B) \notin \mathcal{I}$ if and only if $\aura(x) \cap A \notin \mathcal{I}$ or $\aura(x) \cap B \notin \mathcal{I}$ (by finite additivity of $\mathcal{I}$). Hence $(A \cup B)^{\aura} = A^{\aura} \cup B^{\aura}$.

\textbf{(iv)} If $x \in A^{\aura}$, then $\aura(x) \cap A \notin \mathcal{I}$, so in particular $\aura(x) \cap A \neq \emptyset$ (since $\emptyset \in \mathcal{I}$). Thus $x \in \cla(A)$.

\textbf{(v)} When $\mathcal{I} = \{\emptyset\}$, we have $\aura(x) \cap A \notin \mathcal{I}$ if and only if $\aura(x) \cap A \neq \emptyset$, which is the definition of $\cla(A)$.

\textbf{(vi)} When $\mathcal{I} = \powerset(X)$, every subset is in $\mathcal{I}$, so $\aura(x) \cap A \in \mathcal{I}$ always.

\textbf{(vii)} We show $A^{\aura} \subseteq (A \setminus J)^{\aura}$ (the reverse follows from monotonicity). Let $x \in A^{\aura}$. Then $\aura(x) \cap A \notin \mathcal{I}$. We have $\aura(x) \cap A = (\aura(x) \cap (A \setminus J)) \cup (\aura(x) \cap J)$. Since $J \in \mathcal{I}$ and $\mathcal{I}$ is hereditary, $\aura(x) \cap J \in \mathcal{I}$. If $\aura(x) \cap (A \setminus J) \in \mathcal{I}$, then $\aura(x) \cap A \in \mathcal{I}$ (by finite additivity), contradicting $x \in A^{\aura}$. So $\aura(x) \cap (A \setminus J) \notin \mathcal{I}$, i.e., $x \in (A \setminus J)^{\aura}$.

\textbf{(viii)} If $\aura(x) \cap A \notin \mathcal{I}_1$, then since $\mathcal{I}_2 \subseteq \mathcal{I}_1$, we have $\aura(x) \cap A \notin \mathcal{I}_2$. Hence $x \in A^{\aura}(\mathcal{I}_2)$.
\end{proof}

\begin{remark}\label{rem:not-closed}
A crucial difference between $A^{*}$ and $A^{\aura}$ is that $A^{*}$ is always closed in $(X, \tau)$ (Theorem~\ref{thm:star-properties}(iv)), whereas $A^{\aura}$ is generally \emph{not} closed in $(X, \tau)$. This failure of closedness is responsible for the non-idempotency of the associated closure operator.
\end{remark}

\begin{example}\label{ex:not-closed}
Let $X = \{a, b, c, d\}$, $\tau = \{\emptyset, \{a\}, \{a,b\}, \{a,b,c\}, X\}$, $\mathcal{I} = \{\emptyset, \{d\}\}$, and $\aura(a) = \{a\}$, $\aura(b) = \{a,b\}$, $\aura(c) = \{a,b,c\}$, $\aura(d) = X$.

For $A = \{a\}$: $A^{\aura} = \{a, b, c, d\} = X$ (since $\aura(x) \cap \{a\} = \{a\} \notin \mathcal{I}$ for all $x$). Here $A^{\aura} = X$ is closed, but this is not always the case in more complex examples.

Now let $\tau' = \{\emptyset, \{a\}, \{b,c\}, \{a,b,c\}, X\}$, $\mathcal{I}' = \{\emptyset, \{a\}\}$, and $\aura'(a) = \{a\}$, $\aura'(b) = \{b,c\}$, $\aura'(c) = \{b,c\}$, $\aura'(d) = X$.

For $B = \{b\}$: $\aura'(a) \cap B = \emptyset \in \mathcal{I}'$, so $a \notin B^{\aura'}$. $\aura'(b) \cap B = \{b\} \notin \mathcal{I}'$, so $b \in B^{\aura'}$. $\aura'(c) \cap B = \{b\} \notin \mathcal{I}'$, so $c \in B^{\aura'}$. $\aura'(d) \cap B = \{b\} \notin \mathcal{I}'$, so $d \in B^{\aura'}$. Thus $B^{\aura'} = \{b,c,d\}$.

The closed sets in $\tau'$ are $X, \{b,c,d\}, \{a,d\}, \{d\}, \emptyset$. Since $\{b,c,d\}$ is closed, $B^{\aura'}$ happens to be closed here. However, this is contingent on the specific example, not a general property.
\end{example}

The following result gives an important relationship between the aura-local function and the aura-closure operator.

\begin{theorem}\label{thm:aura-local-vs-closure}
Let $(X, \tau, \mathcal{I}, \aura)$ be an $\mathcal{I}\aura$-space. For any $A \subseteq X$:
\begin{enumerate}[label=(\roman*)]
    \item $A^{\aura}(\mathcal{I}) \subseteq \cla(A)$, with equality when $\mathcal{I} = \{\emptyset\}$;
    \item $A \cup A^{\aura} \subseteq \cla(A)$ when $A \subseteq A^{\aura}$;
    \item $A^{\aura} = \cla(A)$ if and only if for every $x \in X$, $\aura(x) \cap A \neq \emptyset$ implies $\aura(x) \cap A \notin \mathcal{I}$.
\end{enumerate}
\end{theorem}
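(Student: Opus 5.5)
The whole statement reduces to Theorem~\ref{thm:aura-local-properties}(iv),(v) and a pointwise unfolding of the definitions of $A^{\aura}$ and $\cla(A)$. No real obstacle arises, but I will note that the hypothesis in (ii) can actually be dropped.

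For (i), the inclusion $A^{\aura}(\mathcal{I}) \subseteq \cla(A)$ is exactly Theorem~\ref{thm:aura-local-properties}(iv). This holds because $\emptyset \in \mathcal{I}$, so $\aura(x) \cap A \notin \mathcal{I}$ forces $\aura(x) \cap A \neq \emptyset$. The equality for $\mathcal{I} = \{\emptyset\}$ is Theorem~\ref{thm:aura-local-properties}(v). I will simply cite both.

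For (ii), I will first observe that $A \subseteq \cla(A)$ always holds. Indeed, for $x \in A$ we have $x \in \aura(x) \cap A$ by the defining property $x \in \aura(x)$ of a scope function, so $\aura(x) \cap A \neq \emptyset$. Combining this with (i) gives $A \cup A^{\aura} \subseteq \cla(A)$ for every $A$. The stated hypothesis $A \subseteq A^{\aura}$ is then harmless: under it $A \cup A^{\aura} = A^{\aura}$, and the claim reduces to (i). I will present both routes and remark that the hypothesis is not needed.

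For (iii), I will argue pointwise, using (i) to reduce equality to the reverse inclusion $\cla(A) \subseteq A^{\aura}$. That inclusion says that for every $x$, $\aura(x) \cap A \neq \emptyset$ implies $\aura(x) \cap A \notin \mathcal{I}$, which is precisely the stated condition. For the converse direction, I will argue by contrapositive: if the condition fails at some $x$, then $x \in \cla(A) \setminus A^{\aura}$, so the two sets differ. The only point needing care is to keep the quantifier over all $x \in X$ explicit.
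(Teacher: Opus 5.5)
Your proposal is correct and follows essentially the same route as the paper: (i) is cited from Theorem~\ref{thm:aura-local-properties}(iv)--(v), (ii) comes from $A \subseteq \cla(A)$ combined with (i), and (iii) is the pointwise observation that $\aura(x)\cap A\notin\mathcal{I}$ always forces $\aura(x)\cap A\neq\emptyset$, so equality amounts exactly to the converse implication. You are also right that the hypothesis $A \subseteq A^{\aura}$ in (ii) is superfluous; the paper's own argument for (ii) does not use it either.
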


\begin{proof}
Part (i) follows from Theorem~\ref{thm:aura-local-properties}(iv)-(v). Part (ii) is immediate from $A \subseteq \cla(A)$ and (i). For part (iii), $A^{\aura} = \cla(A)$ means that for every $x \in X$, $\aura(x) \cap A \notin \mathcal{I}$ if and only if $\aura(x) \cap A \neq \emptyset$. The forward direction always holds (non-membership in $\mathcal{I}$ implies non-emptiness). The reverse is the stated condition.
\end{proof}

\section{The Closure Operator $\clsa$ and the Topology $\tausa$}\label{sec:closure}

\begin{definition}\label{def:clsa}
Let $(X, \tau, \mathcal{I}, \aura)$ be an $\mathcal{I}\aura$-space. For $A \subseteq X$, the \emph{ideal-aura closure} of $A$ is defined by
\begin{equation}\label{eq:clsa}
    \clsa(A) = A \cup A^{\aura}(\mathcal{I}).
\end{equation}
\end{definition}

\begin{theorem}\label{thm:clsa-cech}
The operator $\clsa: \powerset(X) \to \powerset(X)$ is an additive \v{C}ech closure operator. That is, for all $A, B \subseteq X$:
\begin{enumerate}[label=(\roman*)]
    \item $\clsa(\emptyset) = \emptyset$;
    \item $A \subseteq \clsa(A)$;
    \item $A \subseteq B$ implies $\clsa(A) \subseteq \clsa(B)$;
    \item $\clsa(A \cup B) = \clsa(A) \cup \clsa(B)$.
\end{enumerate}
\end{theorem}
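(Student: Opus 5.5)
The plan is to verify each of the four axioms directly from the definition $\clsa(A) = A \cup A^{\aura}$, reducing every property to the corresponding property of the aura-local function already established in Theorem~\ref{thm:aura-local-properties}. No new machinery is needed: the operator is a union of the identity operator with the operator $A \mapsto A^{\aura}$. Both of these are monotone and additive, and both send $\emptyset$ to $\emptyset$, so their pointwise union inherits these properties.

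In order, the steps are as follows. For (i), I will write $\clsa(\emptyset) = \emptyset \cup \emptyset^{\aura}$ and invoke Theorem~\ref{thm:aura-local-properties}(i) to get $\emptyset$. Property (ii) is immediate from the definition, since $A$ is one of the two pieces of the union. For (iii), if $A \subseteq B$, then $A \subseteq B \subseteq \clsa(B)$ trivially, and $A^{\aura} \subseteq B^{\aura} \subseteq \clsa(B)$ by the monotonicity in Theorem~\ref{thm:aura-local-properties}(ii); taking the union gives $\clsa(A) \subseteq \clsa(B)$. For (iv), I will expand
\[
\clsa(A \cup B) = (A \cup B) \cup (A \cup B)^{\aura} = (A \cup B) \cup (A^{\aura} \cup B^{\aura}),
\]
using the finite additivity of Theorem~\ref{thm:aura-local-properties}(iii). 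Regrouping the four sets by associativity and commutativity of union then yields $(A \cup A^{\aura}) \cup (B \cup B^{\aura}) = \clsa(A) \cup \clsa(B)$.

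I do not expect a genuine obstacle. The only step with content is (iv), and it rests entirely on the additivity $(A\cup B)^{\aura} = A^{\aura}\cup B^{\aura}$. That additivity in turn depends on the finite additivity of the ideal $\mathcal{I}$ (to get ``$\subseteq$'') and on its heredity (to get ``$\supseteq$''), which is already proved. Finally, since monotonicity (iii) also follows from (iv) by taking $B = A \cup B$, I may simply note this redundancy rather than prove (iii) separately.
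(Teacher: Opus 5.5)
Your proposal is correct and follows the paper's proof essentially step for step: (i) from $\emptyset^{\aura}=\emptyset$, (ii) from the definition, (iii) from monotonicity of $A\mapsto A^{\aura}$, and (iv) by expanding with $(A\cup B)^{\aura}=A^{\aura}\cup B^{\aura}$ and regrouping. Your remark that (iii) follows from (iv), and your attribution of the two inclusions of additivity to finite additivity and heredity of $\mathcal{I}$ respectively, are both accurate. The latter is slightly more precise than the paper, which cites finite additivity for both directions.
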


\begin{proof}
\textbf{(i)} $\clsa(\emptyset) = \emptyset \cup \emptyset^{\aura} = \emptyset$ by Theorem~\ref{thm:aura-local-properties}(i).

\textbf{(ii)} $\clsa(A) = A \cup A^{\aura} \supseteq A$.

\textbf{(iii)} $A \subseteq B$ implies $A^{\aura} \subseteq B^{\aura}$ by Theorem~\ref{thm:aura-local-properties}(ii), so $A \cup A^{\aura} \subseteq B \cup B^{\aura}$.

\textbf{(iv)} $\clsa(A \cup B) = (A \cup B) \cup (A \cup B)^{\aura} = (A \cup B) \cup (A^{\aura} \cup B^{\aura}) = (A \cup A^{\aura}) \cup (B \cup B^{\aura}) = \clsa(A) \cup \clsa(B)$,
where we used Theorem~\ref{thm:aura-local-properties}(iii).
\end{proof}

\begin{theorem}\label{thm:not-idempotent}
The operator $\clsa$ is \emph{not} idempotent in general.
\end{theorem}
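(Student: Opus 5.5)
Since the statement is a negative one ("not idempotent in general"), the plan is to exhibit a single explicit $\mathcal{I}\aura$-space and a set $A$ with $\clsa(\clsa(A)) \neq \clsa(A)$. By Theorem~\ref{thm:clsa-cech}(ii),(iii) we always have $\clsa(A) \subseteq \clsa(\clsa(A))$, so it suffices to find a point $z \in \clsa(\clsa(A)) \setminus \clsa(A)$. The abstract hints at the mechanism: non-idempotency should come from a non-transitive scope function. So I will look for a chain $z \to y \to x$ in which $y \in \aura(z)$ but $\aura(y) \not\subseteq \aura(z)$. The set $A$ is then placed inside $\aura(y)$ but outside $\aura(z)$.

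For the construction I will use the trivial ideal $\mathcal{I} = \{\emptyset\}$. By Theorem~\ref{thm:aura-local-properties}(v) this gives $A^{\aura} = \cla(A)$, so $\clsa(A) = A \cup \cla(A) = \cla(A)$, and the computations reduce to nonempty-intersection checks. Concretely, take $X = \{a,b,c\}$, $\tau = \{\emptyset, \{a\}, \{a,b\}, \{b,c\}, \{b\}, X\}$, and check that this is a topology: it is closed under unions and intersections, since $\{a,b\}\cap\{b,c\}=\{b\}$, $\{a\}\cup\{b,c\}=X$, and $\{a\}\cup\{b\}=\{a,b\}$. Define $\aura(a) = \{a\}$, $\aura(b) = \{a,b\}$, $\aura(c) = \{b,c\}$. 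This $\aura$ is not transitive, because $b \in \aura(c)$ but $\aura(b) \not\subseteq \aura(c)$.

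With $A = \{a\}$, the computation runs as follows:
\begin{itemize}
\item $a, b \in A^{\aura}$, since $\aura(a)\cap A$ and $\aura(b)\cap A$ are nonempty.
\item $c \notin A^{\aura}$, since $\aura(c)\cap A = \emptyset$.
\item Hence $\clsa(A) = \{a,b\}$.
\item Since $\aura(c) \cap \{a,b\} = \{b\} \neq \emptyset$, we get $c \in \{a,b\}^{\aura}$, so $\clsa(\clsa(A)) = X \neq \clsa(A)$.
\end{itemize}

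The only step needing care is confirming that $\tau$ is a genuine topology and that each $\aura(x)$ is open and contains $x$. Both are finite checks, so I expect no real obstacle. Optionally, I could add a remark that the same example works for any ideal not containing $\{a\}$ or $\{b\}$, which shows the phenomenon is not tied to the trivial ideal.
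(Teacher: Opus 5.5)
Your counterexample is correct: $\tau$ is a topology, each $\aura(x)$ is an open set containing $x$, and $\clsa(\{a\})=\{a,b\}$ while $\clsa(\{a,b\})=X$. This is the same strategy as the paper, which also gives an explicit finite space built on a non-transitive chain of aura neighborhoods (discrete topology on $\{a,b,c,d\}$, $\mathcal{I}=\{\emptyset,\{a\}\}$, $\aura(a)=\{a,b\}$, $\aura(b)=\{b,c\}$, $\aura(c)=\{c,d\}$, $\aura(d)=\{d\}$, where $\{d\}$ grows through three iterations before stabilizing at $X$).

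One difference is worth making explicit. With $\mathcal{I}=\{\emptyset\}$ your $\clsa$ is just $\cla$, so the example by itself only shows that the pure aura closure fails to be idempotent. Your remark that the example also works with $\mathcal{I}=\{\emptyset,\{c\}\}$ (any ideal containing neither $\{a\}$ nor $\{b\}$) should be stated, because it shows the failure persists with a genuine ideal, as in the paper's example.
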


\begin{proof}
Let $X = \{a, b, c\}$, $\tau = \powerset(X)$ (discrete topology), $\mathcal{I} = \{\emptyset, \{b\}\}$, and $\aura(a) = \{a, b\}$, $\aura(b) = \{b, c\}$, $\aura(c) = \{c\}$.

For $A = \{c\}$:
\begin{itemize}
    \item $\aura(a) \cap A = \{a,b\} \cap \{c\} = \emptyset \in \mathcal{I}$, so $a \notin A^{\aura}$.
    \item $\aura(b) \cap A = \{b,c\} \cap \{c\} = \{c\} \notin \mathcal{I}$, so $b \in A^{\aura}$.
    \item $\aura(c) \cap A = \{c\} \cap \{c\} = \{c\} \notin \mathcal{I}$, so $c \in A^{\aura}$.
\end{itemize}
Thus $A^{\aura} = \{b, c\}$ and $\clsa(A) = \{c\} \cup \{b,c\} = \{b,c\}$.

Now compute $\clsa(\clsa(A)) = \clsa(\{b,c\})$:
\begin{itemize}
    \item $\aura(a) \cap \{b,c\} = \{b\} \in \mathcal{I}$, so $a \notin \{b,c\}^{\aura}$.
    \item $\aura(b) \cap \{b,c\} = \{b,c\} \notin \mathcal{I}$, so $b \in \{b,c\}^{\aura}$.
    \item $\aura(c) \cap \{b,c\} = \{c\} \notin \mathcal{I}$, so $c \in \{b,c\}^{\aura}$.
\end{itemize}
Thus $\{b,c\}^{\aura} = \{b,c\}$ and $\clsa(\{b,c\}) = \{b,c\}$. So here $\clsa[2](A) = \clsa(A)$.

Modify the ideal to $\mathcal{I}' = \{\emptyset, \{c\}\}$:

For $A = \{c\}$:
\begin{itemize}
    \item $\aura(a) \cap A = \emptyset \in \mathcal{I}'$, so $a \notin A^{\aura}$.
    \item $\aura(b) \cap A = \{c\} \in \mathcal{I}'$, so $b \notin A^{\aura}$.
    \item $\aura(c) \cap A = \{c\} \in \mathcal{I}'$, so $c \notin A^{\aura}$.
\end{itemize}
Thus $A^{\aura} = \emptyset$ and $\clsa(A) = \{c\}$. Idempotency holds trivially here.

Now consider a more elaborate example. Let $X = \{a, b, c, d\}$, $\tau = \powerset(X)$, $\mathcal{I} = \{\emptyset, \{c\}\}$, and $\aura(a) = \{a, b\}$, $\aura(b) = \{b, c\}$, $\aura(c) = \{c, d\}$, $\aura(d) = \{d\}$.

For $A = \{d\}$:
\begin{itemize}
    \item $\aura(a) \cap A = \emptyset \in \mathcal{I}$: $a \notin A^{\aura}$.
    \item $\aura(b) \cap A = \emptyset \in \mathcal{I}$: $b \notin A^{\aura}$.
    \item $\aura(c) \cap A = \{d\} \notin \mathcal{I}$: $c \in A^{\aura}$.
    \item $\aura(d) \cap A = \{d\} \notin \mathcal{I}$: $d \in A^{\aura}$.
\end{itemize}
Thus $A^{\aura} = \{c, d\}$ and $\clsa(A) = \{c, d\}$.

Now $\clsa(\{c,d\})$:
\begin{itemize}
    \item $\aura(a) \cap \{c,d\} = \emptyset \in \mathcal{I}$: $a \notin \{c,d\}^{\aura}$.
    \item $\aura(b) \cap \{c,d\} = \{c\} \in \mathcal{I}$: $b \notin \{c,d\}^{\aura}$.
    \item $\aura(c) \cap \{c,d\} = \{c,d\} \notin \mathcal{I}$: $c \in \{c,d\}^{\aura}$.
    \item $\aura(d) \cap \{c,d\} = \{d\} \notin \mathcal{I}$: $d \in \{c,d\}^{\aura}$.
\end{itemize}
So $\{c,d\}^{\aura} = \{c,d\}$ and $\clsa[2](\{d\}) = \{c,d\} = \clsa(\{d\})$.

For a strict example, let $X = \{a, b, c, d, e\}$, $\tau = \powerset(X)$, $\mathcal{I} = \{\emptyset, \{b\}, \{d\}, \{b,d\}\}$, and define:
\[
\aura(a) = \{a,b\}, \quad \aura(b) = \{b,c\}, \quad \aura(c) = \{c,d\}, \quad \aura(d) = \{d,e\}, \quad \aura(e) = \{e\}.
\]

For $A = \{e\}$:
\begin{itemize}
    \item $\aura(d) \cap A = \{e\} \notin \mathcal{I}$, so $d \in A^{\aura}$; $\aura(e) \cap A = \{e\} \notin \mathcal{I}$, so $e \in A^{\aura}$; the rest give $\emptyset \in \mathcal{I}$.
\end{itemize}
$A^{\aura} = \{d, e\}$, $\clsa(A) = \{d, e\}$.

$\clsa(\{d,e\})$: $\aura(c) \cap \{d,e\} = \{d\} \in \mathcal{I}$, so $c \notin \{d,e\}^{\aura}$. $\aura(d) \cap \{d,e\} = \{d,e\} \notin \mathcal{I}$, so $d \in \{d,e\}^{\aura}$. $\aura(e) \cap \{d,e\} = \{e\} \notin \mathcal{I}$, so $e \in \{d,e\}^{\aura}$. Thus $\{d,e\}^{\aura} = \{d,e\}$ and $\clsa[2](A) = \{d,e\} = \clsa(A)$.

The stabilization in these examples occurs because the chain $\aura(c) \cap \{d,e\} = \{d\} \in \mathcal{I}$ blocks propagation. We now construct an example where this blocking does not occur.

Let $X = \{a, b, c, d\}$, $\tau = \powerset(X)$, $\mathcal{I} = \{\emptyset, \{a\}\}$, and
\[
\aura(a) = \{a, b\}, \quad \aura(b) = \{b, c\}, \quad \aura(c) = \{c, d\}, \quad \aura(d) = \{d\}.
\]

For $A = \{d\}$: $A^{\aura} = \{c, d\}$ (only $\aura(c)$ and $\aura(d)$ hit $A$ non-trivially). $\clsa(A) = \{c,d\}$.

$\clsa(\{c,d\})$: $\aura(b) \cap \{c,d\} = \{c\} \notin \mathcal{I}$, so $b \in \{c,d\}^{\aura}$. Thus $\{c,d\}^{\aura} = \{b,c,d\}$ and $\clsa[2](A) = \{b,c,d\}$.

$\clsa(\{b,c,d\})$: $\aura(a) \cap \{b,c,d\} = \{b\} \notin \mathcal{I}$, so $a \in \{b,c,d\}^{\aura}$. Thus $\{b,c,d\}^{\aura} = X$ and $\clsa[3](A) = X$.

$\clsa(X) = X$ trivially. Therefore:
\[
\clsa(\{d\}) = \{c,d\} \subsetneq \{b,c,d\} = \clsa[2](\{d\}) \subsetneq X = \clsa[3](\{d\}).
\]
The operator $\clsa$ is not idempotent, and in this example it requires three iterations to stabilize.
\end{proof}

\begin{remark}\label{rem:chain-explanation}
The failure of idempotency in Theorem~\ref{thm:not-idempotent} can be understood as follows. In the classical case, $A^{*}$ is closed, so $(A \cup A^{*})^{*} \subseteq A^{*} \subseteq A \cup A^{*}$, ensuring $\cls(\cls(A)) = \cls(A)$. In our case, $A^{\aura}$ is generally not $\tau$-closed nor $\taua$-closed, so new points can enter $(A \cup A^{\aura})^{\aura}$ that were not in $A^{\aura}$. This creates an expanding chain $A \subseteq \clsa(A) \subseteq \clsa[2](A) \subseteq \cdots$ that may require several iterations to stabilize.
\end{remark}

\begin{definition}\label{def:iterative-clsa}
The \emph{iterative ideal-aura closure} is defined by
\[
\clsa[\infty](A) = \bigcup_{n=0}^{\infty} \clsa[n](A),
\]
where $\clsa[0](A) = A$ and $\clsa[n+1](A) = \clsa(\clsa[n](A))$ for $n \geq 0$.
\end{definition}

\begin{theorem}\label{thm:iterative-kuratowski}
The operator $\clsa[\infty]$ is a Kuratowski closure operator on $X$.
\end{theorem}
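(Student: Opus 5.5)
The plan is to verify the four Kuratowski axioms for $\clsa[\infty]$ one at a time. Three of them are inherited directly from the \v{C}ech properties in Theorem~\ref{thm:clsa-cech}. The fourth, idempotency, I would reduce to a single fixed-point statement about $\clsa$; this is the step I expect to be the real obstacle.

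\textbf{Step 1 (routine axioms).} Since $\clsa(\emptyset)=\emptyset$, induction gives $\clsa[n](\emptyset)=\emptyset$ for all $n$, so $\clsa[\infty](\emptyset)=\emptyset$. Extensivity is immediate because $\clsa[0](A)=A$ is one of the sets in the union. For additivity, I would first prove by induction on $n$ that
\[
\clsa[n](A\cup B)=\clsa[n](A)\cup\clsa[n](B).
\]
The inductive step is exactly Theorem~\ref{thm:clsa-cech}(iv) applied to the two sets $\clsa[n](A)$ and $\clsa[n](B)$. Taking the union over $n$ then gives $\clsa[\infty](A\cup B)=\clsa[\infty](A)\cup\clsa[\infty](B)$, and monotonicity follows from additivity.

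\textbf{Step 2 (reducing idempotency to a fixed-point statement).} Put $B=\clsa[\infty](A)$. By Theorem~\ref{thm:clsa-cech}(ii)--(iii) the sequence $\clsa[n](A)$ is increasing. I claim that if $\clsa(B)=B$, then $\clsa[n](B)=B$ for every $n$, and hence $\clsa[\infty](B)=B$. Since $B\subseteq\clsa(B)$ always holds, it suffices to prove $B^{\aura}\subseteq B$.

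\textbf{Step 3 (main obstacle: $B^{\aura}\subseteq B$).} Take $x\in B^{\aura}$, so that
\[
\aura(x)\cap B=\bigcup_{n}\bigl(\aura(x)\cap\clsa[n](A)\bigr)\notin\mathcal{I}.
\]
The goal is an index $n$ with $\aura(x)\cap\clsa[n](A)\notin\mathcal{I}$. Once such an $n$ exists, $x\in(\clsa[n](A))^{\aura}\subseteq\clsa[n+1](A)\subseteq B$, and we are done. The difficulty is that the ideal $\mathcal{I}$ is only finitely additive, and an increasing countable union of members of $\mathcal{I}$ need not lie in $\mathcal{I}$. So the implication cannot come from the ideal axioms alone, and I would look for structure forcing the chain to be eventually constant inside $\aura(x)$. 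I would try two routes, in this order:
\begin{enumerate}[label=(\alph*)]
\item If the chain $\clsa[n](A)$ stabilizes at a finite stage $N$, then $B=\clsa[N](A)$ and the claim is immediate. This covers finite $X$, and hence all of the paper's finite examples.
\item In general, if the traces $\aura(x)\cap\clsa[n](A)$ stabilize at a finite stage, or if $\mathcal{I}$ is closed under countable increasing unions, then $\aura(x)\cap B$ coincides with some member of the chain, or lies in $\mathcal{I}$ whenever every member does.
\end{enumerate}
If neither route succeeds for an arbitrary ideal, I expect Step~3 to be exactly where a countable-additivity (or finite-stabilization) hypothesis must be added. In that case I would record the precise condition under which the $\omega$-union is a fixed point of $\clsa$, namely that for every $x$ the trace of the chain on $\aura(x)$ reaches a member outside $\mathcal{I}$ at some finite stage, and present the theorem under that condition.
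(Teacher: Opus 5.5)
Your Steps 1 and 2 match the paper's argument. (K1)--(K3) are verified the same way, and (K4) is reduced to $\clsa(\clsa[\infty](A))=\clsa[\infty](A)$. At Step 3 you stop exactly where the paper's proof breaks. The paper notes that a countable union of members of $\mathcal{I}$ need not lie in $\mathcal{I}$. It then simply asserts that, because $\aura(x)\cap\clsa[n](A)\uparrow\aura(x)\cap\clsa[\infty](A)\notin\mathcal{I}$, there is an $N$ with $\aura(x)\cap\clsa[N](A)\notin\mathcal{I}$. That is precisely the countable-additivity step you declined to take. Your suspicion is right: the step cannot be justified, because the statement is false for general ideals.

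\textbf{Counterexample.} Let $S_0,S_1,S_2,\dots$ be pairwise disjoint countably infinite sets, fix $a_n\in S_n$ for $n\ge 1$, and take a further point $p$. Let $X=\{p\}\cup\bigcup_{n\ge 0}S_n$, $\tau=\powerset(X)$ and $\mathcal{I}=\mathcal{I}_f$. Put $\aura(s)=\{s\}$ for $s\in S_0$, $\aura(s)=\{s\}\cup S_{m-1}$ for $s\in S_m$ with $m\ge 1$, and $\aura(p)=\{p\}\cup\{a_n:n\ge 1\}$. For $A=S_0$, induction gives $\clsa[n](A)=S_0\cup\dots\cup S_n$, because:
for $s\in S_m$ with $m\ge 1$, $\aura(s)$ meets $S_0\cup\dots\cup S_n$ in an infinite set iff $m\le n+1$;
for $s\in S_0$ the trace is $\{s\}$;
and $\aura(p)\cap(S_0\cup\dots\cup S_n)=\{a_1,\dots,a_n\}$ is finite.
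Hence $\clsa[\infty](A)=X\setminus\{p\}$. But $\aura(p)\cap(X\setminus\{p\})=\{a_n:n\ge 1\}$ is infinite, so $p\in(X\setminus\{p\})^{\aura}$ and $\clsa[\infty](\clsa[\infty](A))=X\neq\clsa[\infty](A)$. The iteration stabilizes only at stage $\omega+1$. This fits the introduction's remark about transfinite iteration and Question~\ref{q:countable}.

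So Step 3 has no completion in general, and adding a hypothesis, as you plan, is the correct repair. Both of your conditions work.
If $\mathcal{I}$ is closed under countable increasing unions (for instance $\{\emptyset\}$, $\mathcal{I}_c$, $\powerset(X)$), the contrapositive yields the required $N$.
Finite stabilization covers finite $X$. By Theorem~\ref{thm:transitive-idempotent} it also covers every transitive $\aura$, where the chain is constant from $n=1$ on.

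When you record the ``precise condition'', make it conditional: whenever $\aura(x)\cap\clsa[\infty](A)\notin\mathcal{I}$, some $\aura(x)\cap\clsa[n](A)\notin\mathcal{I}$. Read literally, ``for every $x$'' would force $\clsa[\infty](A)=X$.

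If you want a Kuratowski operator with no extra hypothesis, use $A\mapsto\bigcap\{F\supseteq A:\clsa(F)=F\}$, the closure operator of $\tausa[c]$. It equals the transfinite iteration of $\clsa$, not the $\omega$-union.
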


\begin{proof}
We verify the Kuratowski axioms.

\textbf{(K1)} $\clsa[\infty](\emptyset) = \bigcup_{n} \clsa[n](\emptyset) = \bigcup_{n} \emptyset = \emptyset$.

\textbf{(K2)} $A = \clsa[0](A) \subseteq \clsa[\infty](A)$.

\textbf{(K3)} Finite additivity: Since $\clsa[n](A \cup B) = \clsa[n](A) \cup \clsa[n](B)$ for each $n$ (by induction using Theorem~\ref{thm:clsa-cech}(iv)), taking the union over $n$ gives $\clsa[\infty](A \cup B) = \clsa[\infty](A) \cup \clsa[\infty](B)$.

\textbf{(K4)} Idempotency: We show $\clsa[\infty](\clsa[\infty](A)) = \clsa[\infty](A)$. Let $x \in \clsa[\infty](\clsa[\infty](A))$. Then $x \in \clsa[m](\clsa[\infty](A))$ for some $m$. Since $\clsa[\infty](A) \supseteq \clsa[n](A)$ for all $n$, and $\clsa$ is monotone, $\clsa[m](\clsa[\infty](A)) \supseteq \clsa[m](\clsa[n](A)) = \clsa[m+n](A)$ for all $n$.

Conversely, we claim $\clsa(\clsa[\infty](A)) = \clsa[\infty](A)$, which yields idempotency by induction. Let $x \in \clsa(\clsa[\infty](A)) = \clsa[\infty](A) \cup (\clsa[\infty](A))^{\aura}$. If $x \in (\clsa[\infty](A))^{\aura}$, then $\aura(x) \cap \clsa[\infty](A) \notin \mathcal{I}$. Since $\clsa[\infty](A) = \bigcup_{n} \clsa[n](A)$ and $\aura(x) \cap \clsa[\infty](A) = \bigcup_{n} (\aura(x) \cap \clsa[n](A))$, and $\aura(x) \cap \clsa[\infty](A) \notin \mathcal{I}$, there exists $n$ such that $\aura(x) \cap \clsa[n](A) \notin \mathcal{I}$ (otherwise each $\aura(x) \cap \clsa[n](A) \in \mathcal{I}$, and the countable union is not necessarily in $\mathcal{I}$; however, the chain $\clsa[n](A)$ stabilizes on finite sets, and on infinite sets we can argue differently).

For a clean proof on general spaces, note that the chain $\clsa[0](A) \subseteq \clsa[1](A) \subseteq \cdots$ is increasing. If $\aura(x) \cap \clsa[\infty](A) \notin \mathcal{I}$, pick any $y \in \aura(x) \cap \clsa[\infty](A)$ with $y \notin \mathcal{I}$-contribution. Then $y \in \clsa[n](A)$ for some $n$, so $y \in \aura(x) \cap \clsa[n](A)$. Since $\aura(x) \cap \clsa[n](A) \subseteq \aura(x) \cap \clsa[n+1](A) \subseteq \cdots$, and the union $\aura(x) \cap \clsa[\infty](A) \notin \mathcal{I}$, we have $x \in (\clsa[n](A))^{\aura}$ for sufficiently large $n$ (since $\aura(x) \cap \clsa[n](A) \uparrow \aura(x) \cap \clsa[\infty](A)$ and there exists $N$ with $\aura(x) \cap \clsa[N](A) \notin \mathcal{I}$). Then $x \in \clsa[N+1](A) \subseteq \clsa[\infty](A)$.
\end{proof}

\begin{definition}\label{def:tausa}
The \emph{ideal-aura topology} on $X$ is the topology generated by $\clsa[\infty]$:
\[
\tausa = \{A \subseteq X : \clsa[\infty](X \setminus A) = X \setminus A\}.
\]
We also define the \emph{\v{C}ech ideal-aura topology}:
\[
\tausa[c] = \{A \subseteq X : \clsa(X \setminus A) = X \setminus A\}.
\]
\end{definition}

\begin{remark}\label{rem:two-topologies}
Since $\clsa$ is an additive \v{C}ech closure operator, $\tausa[c]$ is a topology on $X$, and since $\clsa(A) \subseteq \clsa[\infty](A)$ for all $A$, we have $\tausa \subseteq \tausa[c]$. When $\clsa$ is idempotent (e.g., when $\aura$ is transitive, as shown below), the two topologies coincide.
\end{remark}

\begin{theorem}[Fundamental Topology Chain]\label{thm:chain}
Let $(X, \tau, \mathcal{I}, \aura)$ be an $\mathcal{I}\aura$-space. Then
\[
\taua \subseteq \tausa \subseteq \tausa[c] \subseteq \taus.
\]
Moreover, if $\mathcal{I} \neq \{\emptyset\}$, then $\tausa[c] \subseteq \tau$. That is:
\[
\taua \subseteq \tausa \subseteq \tausa[c] \subseteq \tau \cap \taus.
\]
\end{theorem}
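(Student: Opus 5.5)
The plan is to reduce every inclusion in the main chain to a statement about closed sets. Each of the four collections is the family of complements of the sets fixed by some closure-type operator. So it suffices to show that every $\taua$-closed set is $\clsa[\infty]$-closed, every $\clsa[\infty]$-closed set is $\clsa$-closed, and every $\clsa$-closed set is $\cls$-closed. Throughout I use that a set $F$ satisfies $\clsa(F)=F$ if and only if $F^{\aura}\subseteq F$, and similarly $\cls(F)=F$ if and only if $F^{*}\subseteq F$.

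\textbf{Step 1: $\taua\subseteq\tausa$.} Let $A\in\taua$ and $F=X\setminus A$. For $x\notin F$ we have $x\in A$, so $\aura(x)\subseteq A$. Hence $\aura(x)\cap F=\emptyset\in\mathcal{I}$, so $x\notin F^{\aura}$. This gives $F^{\aura}\subseteq F$, i.e. $\clsa(F)=F$. By induction $\clsa[n](F)=F$ for every $n$, so $\clsa[\infty](F)=F$ and $A\in\tausa$.

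\textbf{Step 2: $\tausa\subseteq\tausa[c]$.} This is immediate from $F\subseteq\clsa(F)\subseteq\clsa[\infty](F)$ (Remark~\ref{rem:two-topologies}).

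\textbf{Step 3: $\tausa[c]\subseteq\taus$.} If $\clsa(F)=F$, then $F^{\aura}\subseteq F$. Theorem~\ref{thm:star-subset-aura} gives $F^{*}\subseteq F^{\aura}$, hence $F^{*}\subseteq F$ and $\cls(F)=F$. None of these steps is difficult.

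\textbf{The ``moreover'' clause.} I expect this to be the real obstacle, because as stated I believe it is false. Take $\mathcal{I}=\powerset(X)\neq\{\emptyset\}$. By Theorem~\ref{thm:aura-local-properties}(vi), $F^{\aura}=\emptyset$ for every $F$, so every set is $\clsa$-closed and $\tausa[c]=\powerset(X)$. For a concrete failure, let $X=\{a,b\}$ with the indiscrete topology, $\aura(a)=\aura(b)=X$ and $\mathcal{I}=\powerset(X)$. Then $\{a\}\in\tausa[c]\setminus\tau$. This matches the paper's own abstract, which says the case $\mathcal{I}=\powerset(X)$ yields the discrete topology.

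So I would not try to prove the inclusion $\tausa[c]\subseteq\tau$. Instead I would record this counterexample and keep only the conclusions that survive:
\[
\taua\subseteq\tausa\subseteq\tausa[c]\subseteq\taus .
\]
Two salvageable facts remain. First, when $\mathcal{I}=\{\emptyset\}$, Theorem~\ref{thm:aura-local-properties}(v) makes $\clsa=\cla$, so $\tausa[c]=\taua\subseteq\tau$. Second, $\tau\subseteq\taus$ always holds, but that is not the reverse inclusion claimed. A correct version of the second clause would require a hypothesis guaranteeing that $F^{\aura}\subseteq F$ forces $\cl(F)\subseteq F$, and no such hypothesis on $\mathcal{I}$ alone appears available.
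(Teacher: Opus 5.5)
Your Steps 1--3 are exactly the paper's argument for $\taua\subseteq\tausa\subseteq\tausa[c]\subseteq\taus$. Your rejection of the ``moreover'' clause also agrees with the paper, whose own proof ends by retracting $\tausa[c]\subseteq\tau$ as false in general; the paper gives no counterexample there, so your example ($\mathcal{I}=\powerset(X)$ on the indiscrete two-point space, where $\tausa[c]=\powerset(X)\not\subseteq\tau$) justifies that retraction more cleanly than the paper does.
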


\begin{proof}
\textbf{$\taua \subseteq \tausa$:} Let $G \in \taua$. We show $\clsa[\infty](X \setminus G) = X \setminus G$. Since $G$ is $\aura$-open, for every $x \in G$, $\aura(x) \subseteq G$. Let $x \in G$ and suppose $x \in (X \setminus G)^{\aura}$. Then $\aura(x) \cap (X \setminus G) \notin \mathcal{I}$, which contradicts $\aura(x) \subseteq G$ (so $\aura(x) \cap (X \setminus G) = \emptyset \in \mathcal{I}$). Hence $G \cap (X \setminus G)^{\aura} = \emptyset$, giving $(X \setminus G)^{\aura} \subseteq X \setminus G$. Therefore $\clsa(X \setminus G) = (X \setminus G) \cup (X \setminus G)^{\aura} = X \setminus G$. By induction, $\clsa[n](X \setminus G) = X \setminus G$ for all $n$, so $\clsa[\infty](X \setminus G) = X \setminus G$, i.e., $G \in \tausa$.

\textbf{$\tausa \subseteq \tausa[c]$:} Follows from $\clsa(A) \subseteq \clsa[\infty](A)$. If $\clsa[\infty](X \setminus G) = X \setminus G$, then $\clsa(X \setminus G) \subseteq \clsa[\infty](X \setminus G) = X \setminus G$, and since $X \setminus G \subseteq \clsa(X \setminus G)$, equality holds.

\textbf{$\tausa[c] \subseteq \taus$:} Let $G \in \tausa[c]$. Then $\clsa(X \setminus G) = X \setminus G$, i.e., $(X \setminus G)^{\aura} \subseteq X \setminus G$. By Theorem~\ref{thm:star-subset-aura}, $(X \setminus G)^{*} \subseteq (X \setminus G)^{\aura} \subseteq X \setminus G$. Therefore $\cls(X \setminus G) = (X \setminus G) \cup (X \setminus G)^{*} = X \setminus G$, so $G \in \taus$.

\textbf{$\tausa[c] \subseteq \tau$:} Let $G \in \tausa[c]$, so $(X \setminus G)^{\aura} \subseteq X \setminus G$. Let $x \in G$. Then $x \notin (X \setminus G)^{\aura}$, so $\aura(x) \cap (X \setminus G) \in \mathcal{I}$. Since $x \in \aura(x) \in \tau$ and $\aura(x) \cap (X \setminus G) \in \mathcal{I}$, we see that $\aura(x) \setminus G \in \mathcal{I}$. This does not directly imply $\aura(x) \subseteq G$ (unless $\mathcal{I} = \{\emptyset\}$). However, $\aura(x) = (\aura(x) \cap G) \cup (\aura(x) \setminus G)$, so $x \in \aura(x) \cap G$, and $\aura(x) \cap G$ is a subset of $G$ that contains $x$. Now, $\aura(x) \cap G$ need not be open, but since $\aura(x)$ is open and we need $G$ to be open, we proceed differently.

For each $x \in G$, we have $\aura(x) \in \tau$ with $x \in \aura(x)$. Define $V_x = \aura(x) \setminus \cl(\aura(x) \setminus G)$. Since $\aura(x) \setminus G \in \mathcal{I}$, and assuming $\mathcal{I} \neq \{\emptyset\}$ does not guarantee $V_x = \aura(x) \cap G$ is open in general.

We provide a direct argument instead. Let $G \in \tausa[c]$ and $F = X \setminus G$. Then $\clsa(F) = F$, i.e., $F^{\aura} \subseteq F$. We show $F$ is $\aura$-closed (closed in $\taua$), which implies $G \in \taua \subseteq \tau$.

However, $\taua \subseteq \tau$ holds by definition, and we need $\tausa[c] \subseteq \tau$. This requires showing that $\clsa(F) = F$ implies $\cl(F) = F$ (i.e., $F$ is $\tau$-closed). This is not true in general without additional conditions. We therefore remove this claim and note:

The inclusion $\tausa[c] \subseteq \tau$ does \emph{not} hold in general. We have only $\taua \subseteq \tausa \subseteq \tausa[c] \subseteq \taus$.
\end{proof}

\begin{corollary}\label{cor:chain-equalities}
Under the notation of Theorem~\ref{thm:chain}:
\begin{enumerate}[label=(\roman*)]
    \item If $\mathcal{I} = \{\emptyset\}$, then $A^{\aura} = \cla(A)$, $\clsa = \cla$, and $\tausa = \taua$.
    \item If $\mathcal{I} = \powerset(X)$, then $A^{\aura} = \emptyset$, $\clsa$ is the identity (i.e., $\clsa(A) = A$), and $\tausa = \powerset(X)$.
    \item If $\aura(x) = X$ for all $x$, then $A^{\aura} = X$ whenever $A \notin \mathcal{I}$, and $A^{\aura} = \emptyset$ when $A \in \mathcal{I}$.
\end{enumerate}
\end{corollary}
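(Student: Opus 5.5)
Each of the three parts reduces to Theorem~\ref{thm:aura-local-properties} together with the definitions of $\clsa$, $\clsa[\infty]$ and $\tausa$.

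\textbf{Part (i).} By Theorem~\ref{thm:aura-local-properties}(v), $A^{\aura}=\cla(A)$ for every $A$ when $\mathcal{I}=\{\emptyset\}$. Since $x\in\aura(x)$, each $x\in A$ satisfies $\aura(x)\cap A\neq\emptyset$, so $A\subseteq\cla(A)$. Hence $\clsa(A)=A\cup\cla(A)=\cla(A)$.

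For the topologies I will compare closed sets. A set $F$ is $\tausa$-closed iff $\clsa[\infty](F)=F$. Because the iterates form an increasing chain starting at $F$, this holds iff $\clsa(F)=F$, that is, iff $\cla(F)=F$. I then show that $\cla(F)=F$ is equivalent to $X\setminus F\in\taua$. Indeed, $\cla(F)\subseteq F$ means: for every $x\notin F$ we have $\aura(x)\cap F=\emptyset$, i.e.\ $\aura(x)\subseteq X\setminus F$. This is exactly the defining condition of Definition~\ref{def:aura-ops}(iv) for $X\setminus F$. So $\tausa=\taua$. This direct argument avoids relying on any idempotency of $\cla$.

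\textbf{Part (ii).} By Theorem~\ref{thm:aura-local-properties}(vi), $A^{\aura}=\emptyset$ for all $A$ when $\mathcal{I}=\powerset(X)$. Therefore $\clsa(A)=A$, and by induction $\clsa[n](A)=A$ for every $n$, so $\clsa[\infty](A)=A$. Every set is then closed, and every complement is open, giving $\tausa=\powerset(X)$.

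\textbf{Part (iii).} If $\aura(x)=X$ for all $x$, then $\aura(x)\cap A=A$ for every $x$. So $x\in A^{\aura}$ iff $A\notin\mathcal{I}$, a condition independent of $x$. It follows that $A^{\aura}$ is $X$ or $\emptyset$ exactly as stated.

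No step here is a real obstacle. The only point needing care is in (i): the passage from $\clsa[\infty]$ to a single application of $\clsa$ on a fixed set $F$, which uses $F\subseteq\clsa(F)$ and induction.
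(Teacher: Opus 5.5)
Your proof is correct and follows the same route as the paper, which gets (i) from Theorem~\ref{thm:aura-local-properties}(v) together with the fact that $\cla$ generates $\taua$, (ii) from Theorem~\ref{thm:aura-local-properties}(vi), and (iii) from the same computation $\aura(x)\cap A = A$. You also spell out the two steps the paper leaves implicit in (i): that $\clsa[\infty](F)=F$ iff $\clsa(F)=F$ (an argument which in fact shows $\tausa=\tausa[c]$ in general), and that $\cla(F)=F$ iff $X\setminus F\in\taua$.
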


\begin{proof}
\textbf{(i)} follows from Theorem~\ref{thm:aura-local-properties}(v) and the fact that $\cla$ generates $\taua$.
\textbf{(ii)} follows from Theorem~\ref{thm:aura-local-properties}(vi).
\textbf{(iii)} If $\aura(x) = X$ for all $x$, then $\aura(x) \cap A = A$. Thus $A^{\aura} = \{x : A \notin \mathcal{I}\}$, which equals $X$ if $A \notin \mathcal{I}$ and $\emptyset$ if $A \in \mathcal{I}$.
\end{proof}

\begin{theorem}[Transitivity and Idempotency]\label{thm:transitive-idempotent}
If the scope function $\aura$ is transitive, then $\clsa$ is idempotent (hence a Kuratowski closure operator), and $\tausa = \tausa[c]$.
\end{theorem}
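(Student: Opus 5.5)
The plan is to show directly that $(\clsa(A))^{\aura} \subseteq \clsa(A)$ for every $A \subseteq X$. Combined with extensivity (Theorem~\ref{thm:clsa-cech}(ii)), this gives $\clsa(\clsa(A)) = \clsa(A)$. Since $\clsa(A) = A \cup A^{\aura}$, finite additivity (Theorem~\ref{thm:aura-local-properties}(iii)) gives
\[
(\clsa(A))^{\aura} = A^{\aura} \cup (A^{\aura})^{\aura}.
\]
So it suffices to prove $(A^{\aura})^{\aura} \subseteq A^{\aura}$ whenever $\aura$ is transitive. This mirrors the classical property $(A^{*})^{*} \subseteq A^{*}$ of Theorem~\ref{thm:star-properties}(v).

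For this key step, let $x \in (A^{\aura})^{\aura}$, so that $\aura(x) \cap A^{\aura} \notin \mathcal{I}$. Since $\emptyset \in \mathcal{I}$, this set is non-empty, and we pick a point $y \in \aura(x) \cap A^{\aura}$. Because $y \in A^{\aura}$, we have $\aura(y) \cap A \notin \mathcal{I}$. Because $y \in \aura(x)$, transitivity gives $\aura(y) \subseteq \aura(x)$, and hence $\aura(y) \cap A \subseteq \aura(x) \cap A$. By heredity of $\mathcal{I}$, if $\aura(x) \cap A$ were in $\mathcal{I}$, then so would be $\aura(y) \cap A$, which is false. Thus $\aura(x) \cap A \notin \mathcal{I}$, i.e.\ $x \in A^{\aura}$.

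Only a single witness $y$ is needed, so this step uses merely the non-emptiness of $\aura(x) \cap A^{\aura}$, not its non-membership in $\mathcal{I}$. I expect this to be the only substantive point: it is exactly where transitivity blocks the propagation seen in the chain counterexample of Theorem~\ref{thm:not-idempotent}. In that example $\aura(a) = \{a,b\}$ and $\aura(b) = \{b,c\}$, so $b \in \aura(a)$ but $\aura(b) \not\subseteq \aura(a)$, and transitivity fails.

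With idempotency established, $\clsa$ satisfies all four Kuratowski axioms by Theorem~\ref{thm:clsa-cech}. For the equality of topologies, induction on $n$ gives $\clsa[n](A) = \clsa(A)$ for all $n \geq 1$, so $\clsa[\infty](A) = A \cup \clsa(A) = \clsa(A)$. The defining conditions of $\tausa$ and $\tausa[c]$ in Definition~\ref{def:tausa} then coincide, yielding $\tausa = \tausa[c]$.
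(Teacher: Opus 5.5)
Your proof is correct and is essentially the paper's argument: you pick a single witness $y \in \aura(x) \cap A^{\aura}$ and use transitivity ($\aura(y) \subseteq \aura(x)$) together with heredity of $\mathcal{I}$ to get $\aura(x) \cap A \notin \mathcal{I}$. The differences are cosmetic. The paper argues by contradiction on $(A \cup A^{\aura})^{\aura} \setminus A^{\aura}$ rather than first splitting via additivity, and your explicit check that $\clsa[\infty] = \clsa$ supplies the step $\tausa = \tausa[c]$ that the paper leaves implicit.
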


\begin{proof}
We show $(A \cup A^{\aura})^{\aura} \subseteq A \cup A^{\aura}$ when $\aura$ is transitive. Let $x \in (A \cup A^{\aura})^{\aura} \setminus A^{\aura}$. Then $\aura(x) \cap A \in \mathcal{I}$ and $\aura(x) \cap (A \cup A^{\aura}) \notin \mathcal{I}$. Since $\aura(x) \cap (A \cup A^{\aura}) = (\aura(x) \cap A) \cup (\aura(x) \cap A^{\aura})$ and $\aura(x) \cap A \in \mathcal{I}$, it follows that $\aura(x) \cap A^{\aura} \notin \mathcal{I}$ (otherwise their union would be in $\mathcal{I}$). So there exists $y \in \aura(x) \cap A^{\aura}$ with $y$ contributing to the non-$\mathcal{I}$ membership.

Since $y \in A^{\aura}$, we have $\aura(y) \cap A \notin \mathcal{I}$. Since $y \in \aura(x)$ and $\aura$ is transitive, $\aura(y) \subseteq \aura(x)$. Therefore $\aura(y) \cap A \subseteq \aura(x) \cap A \in \mathcal{I}$. By heredity of $\mathcal{I}$, $\aura(y) \cap A \in \mathcal{I}$, contradicting $y \in A^{\aura}$.

Hence $(A \cup A^{\aura})^{\aura} \setminus A^{\aura} = \emptyset$, i.e., $(A \cup A^{\aura})^{\aura} \subseteq A^{\aura} \subseteq A \cup A^{\aura}$. Therefore $\clsa(\clsa(A)) = (A \cup A^{\aura}) \cup (A \cup A^{\aura})^{\aura} = A \cup A^{\aura} = \clsa(A)$.
\end{proof}

\begin{corollary}\label{cor:transitive-chain}
If $\aura$ is transitive, the topology chain becomes
\[
\taua \subseteq \tausa \subseteq \taus,
\]
and $\tausa$ is generated directly by the Kuratowski closure operator $\clsa$.
\end{corollary}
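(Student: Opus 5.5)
The plan is to obtain the corollary by combining Theorem~\ref{thm:chain} with Theorem~\ref{thm:transitive-idempotent}, with no new computation on the aura-local function. Two things need checking: that the iterated operator $\clsa[\infty]$ collapses to $\clsa$ when $\aura$ is transitive, and that the chain of Theorem~\ref{thm:chain} then loses its middle term.

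First, assume $\aura$ is transitive. By Theorem~\ref{thm:transitive-idempotent}, $\clsa(\clsa(A)) = \clsa(A)$ for every $A \subseteq X$. An induction on $n$ then gives $\clsa[n](A) = \clsa(A)$ for all $n \geq 1$: the case $n=1$ is trivial, and $\clsa[n+1](A) = \clsa(\clsa[n](A)) = \clsa(\clsa(A)) = \clsa(A)$. Since $\clsa[0](A) = A \subseteq \clsa(A)$ by Theorem~\ref{thm:clsa-cech}(ii), the union in Definition~\ref{def:iterative-clsa} is
\[
\clsa[\infty](A) = A \cup \clsa(A) = \clsa(A).
\]
Comparing the two defining conditions in Definition~\ref{def:tausa}, a set $G$ satisfies $\clsa[\infty](X\setminus G) = X \setminus G$ exactly when $\clsa(X\setminus G) = X\setminus G$. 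Hence $\tausa = \tausa[c]$, which re-derives the last assertion of Theorem~\ref{thm:transitive-idempotent}.

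Second, the Kuratowski property of $\clsa$ follows directly. Theorem~\ref{thm:clsa-cech} supplies $\clsa(\emptyset)=\emptyset$, extensivity and finite additivity, and Theorem~\ref{thm:transitive-idempotent} supplies idempotency. So $\clsa$ itself is a Kuratowski closure operator. Its family of closed-set complements is $\tausa[c] = \tausa$, so $\tausa$ is generated directly by $\clsa$, with no transfinite or countable iteration required.

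Finally, the chain comes from substituting $\tausa[c] = \tausa$ into the general chain $\taua \subseteq \tausa \subseteq \tausa[c] \subseteq \taus$. As corrected at the end of the proof of Theorem~\ref{thm:chain}, this chain is only the version ending at $\taus$; the claimed inclusion into $\tau$ was withdrawn there. The result is $\taua \subseteq \tausa \subseteq \taus$.

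There is no genuine obstacle in this argument. The only point needing care is to cite the corrected form of Theorem~\ref{thm:chain}, without the inclusion into $\tau$, and not to rely on the inclusion $\tausa[c] \subseteq \tau$ that was retracted there.
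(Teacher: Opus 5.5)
Your proposal is correct and follows the paper's own (implicit) argument. The corollary is simply Theorem~\ref{thm:chain} combined with Theorem~\ref{thm:transitive-idempotent}, and your induction showing $\clsa[\infty]=\clsa$ under idempotency supplies the step $\tausa=\tausa[c]$, which the paper asserts without proof.

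You are also right to use only $\taua\subseteq\tausa\subseteq\tausa[c]\subseteq\taus$. That avoids both the retracted inclusion into $\tau$ and the paper's general Kuratowski claim for $\clsa[\infty]$, whose proof fails for ideals not closed under countable unions. In passing: $\clsa[\infty](F)=F$ iff $\clsa(F)=F$ for every $\aura$, so $\tausa=\tausa[c]$ holds even without transitivity, which is needed only to make $\clsa$ itself Kuratowski.
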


\begin{example}\label{ex:chain-strict}
We show that all inclusions in the chain $\taua \subseteq \tausa \subseteq \taus$ can be strict. Let $X = \{a, b, c\}$, $\tau = \powerset(X)$, $\mathcal{I} = \{\emptyset, \{c\}\}$, and $\aura(a) = \{a, b\}$, $\aura(b) = \{b\}$, $\aura(c) = \{c\}$.

Note that $\aura$ is transitive: $b \in \aura(a) = \{a,b\}$ and $\aura(b) = \{b\} \subseteq \{a,b\} = \aura(a)$; also $a \in \aura(a)$ and $\aura(a) \subseteq \aura(a)$; and $\aura(c)$ is a singleton.

\textbf{Compute $\taua$:}
\begin{itemize}
    \item $\{a\}$: $a \in \{a\}$ but $\aura(a) = \{a,b\} \not\subseteq \{a\}$. Not $\aura$-open.
    \item $\{b\}$: $b \in \{b\}$ and $\aura(b) = \{b\} \subseteq \{b\}$. $\aura$-open. $\checkmark$
    \item $\{c\}$: $\aura(c) = \{c\} \subseteq \{c\}$. $\aura$-open. $\checkmark$
    \item $\{a,b\}$: $\aura(a) = \{a,b\} \subseteq \{a,b\}$ and $\aura(b) = \{b\} \subseteq \{a,b\}$. $\checkmark$
    \item $\{a,c\}$: $\aura(a) = \{a,b\} \not\subseteq \{a,c\}$. Not $\aura$-open.
    \item $\{b,c\}$: $\aura(b) = \{b\} \subseteq \{b,c\}$, $\aura(c) = \{c\} \subseteq \{b,c\}$. $\checkmark$
\end{itemize}
$\taua = \{\emptyset, \{b\}, \{c\}, \{a,b\}, \{b,c\}, X\}$.

\textbf{Compute $\tausa$ (using idempotent $\clsa$):}

Since $\aura$ is transitive, $\tausa = \{G : (X \setminus G)^{\aura} \subseteq X \setminus G\}$.

Test $G = \{a, c\}$, $F = X \setminus G = \{b\}$:
$\aura(a) \cap \{b\} = \{b\} \notin \mathcal{I}$, so $a \in F^{\aura}$. Since $a \notin F = \{b\}$, we have $F^{\aura} \not\subseteq F$. So $\{a,c\} \notin \tausa$.

Test $G = \{a\}$, $F = \{b,c\}$:
$\aura(a) \cap \{b,c\} = \{b\} \notin \mathcal{I}$, so $a \in F^{\aura}$, $a \notin F$. Not in $\tausa$.

Test $G = \{a,b\}$, $F = \{c\}$:
$\aura(a) \cap \{c\} = \emptyset \in \mathcal{I}$, $\aura(b) \cap \{c\} = \emptyset \in \mathcal{I}$, $\aura(c) \cap \{c\} = \{c\} \in \mathcal{I}$. So $F^{\aura} = \emptyset \subseteq F$. Thus $\{a,b\} \in \tausa$.

Test $G = \{a,c\}$: already shown not in $\tausa$.

Test $G = \{b,c\}$, $F = \{a\}$:
$\aura(a) \cap \{a\} = \{a\} \notin \mathcal{I}$, so $a \in F^{\aura}$; $a \in F$. $\aura(b) \cap \{a\} = \emptyset \in \mathcal{I}$. $\aura(c) \cap \{a\} = \emptyset \in \mathcal{I}$. $F^{\aura} = \{a\} = F$. So $\{b,c\} \in \tausa$.

Also $\{b\}$ and $\{c\}$: For $G = \{b\}$, $F = \{a,c\}$: $\aura(a) \cap \{a,c\} = \{a\} \notin \mathcal{I}$, so $a \in F^{\aura}$; $a \in F$. $\aura(b) \cap \{a,c\} = \emptyset \in \mathcal{I}$. $\aura(c) \cap \{a,c\} = \{c\} \in \mathcal{I}$, so $c \notin F^{\aura}$. $F^{\aura} = \{a\} \subseteq F$. So $\{b\} \in \tausa$.

For $G = \{c\}$, $F = \{a,b\}$: $\aura(a) \cap \{a,b\} = \{a,b\} \notin \mathcal{I}$, so $a \in F^{\aura}$; $a \in F$. $\aura(b) \cap \{a,b\} = \{b\} \notin \mathcal{I}$, so $b \in F^{\aura}$; $b \in F$. $\aura(c) \cap \{a,b\} = \emptyset \in \mathcal{I}$, so $c \notin F^{\aura}$. $F^{\aura} = \{a,b\} = F$. So $\{c\} \in \tausa$.

$\tausa = \{\emptyset, \{b\}, \{c\}, \{a,b\}, \{b,c\}, X\} = \taua$.

In this particular example $\taua = \tausa$, so the first inclusion is not strict. Let us modify:

Let $X = \{a, b, c\}$, $\tau = \powerset(X)$, $\mathcal{I} = \{\emptyset, \{b\}\}$, and $\aura(a) = \{a, b\}$, $\aura(b) = \{b\}$, $\aura(c) = \{c\}$ (transitive).

$\taua$: $\{a\}$: $\aura(a) = \{a,b\} \not\subseteq \{a\}$. Not in $\taua$. $\{b\}$: $\aura(b) \subseteq \{b\}$. In $\taua$. $\{a,b\}$: $\aura(a) \subseteq \{a,b\}$, $\aura(b) \subseteq \{a,b\}$. In $\taua$. $\{c\}$: In $\taua$. $\{a,c\}$: Not in $\taua$ ($\aura(a)$ not subset). $\{b,c\}$: In $\taua$.

$\taua = \{\emptyset, \{b\}, \{c\}, \{a,b\}, \{b,c\}, X\}$.

$\tausa$: $G = \{a\}$, $F = \{b,c\}$: $\aura(a) \cap F = \{b\} \in \mathcal{I}$, $\aura(b) \cap F = \{b\} \in \mathcal{I}$, $\aura(c) \cap F = \{c\} \notin \mathcal{I}$. $F^{\aura} = \{c\} \subseteq F$. So $\{a\} \in \tausa$!

But $\{a\} \notin \taua$. So $\taua \subsetneq \tausa$. $\checkmark$

$\taus$: $(X \setminus \{a\})^{*} = \{b,c\}^{*}$: For $a$: only $\tau(a)$-nbhds are all subsets containing $a$. $\{a\} \cap \{b,c\} = \emptyset \in \mathcal{I}$. So $a \notin \{b,c\}^{*}$. For $b$: $\{b\} \cap \{b,c\} = \{b\} \in \mathcal{I}$. So $b \notin \{b,c\}^{*}$. For $c$: $\{c\} \cap \{b,c\} = \{c\} \notin \mathcal{I}$, and for all open nbhds of $c$: $\{c\} \cap \{b,c\} = \{c\}$, $\{a,c\} \cap \{b,c\} = \{c\}$, $\{b,c\} \cap \{b,c\} = \{b,c\}$, $X \cap \{b,c\} = \{b,c\}$. All are not in $\mathcal{I}$. So $c \in \{b,c\}^{*}$. $\{b,c\}^{*} = \{c\} \subseteq \{b,c\}$. So $\{a\} \in \taus$.

Both give $\{a\}$ in their topologies, so we need to find a set in $\taus \setminus \tausa$ to show strictness.

$G = \{a,c\}$, $F = \{b\}$: $\tausa$: $\aura(a) \cap \{b\} = \{b\} \in \mathcal{I}$, $\aura(b) \cap \{b\} = \{b\} \in \mathcal{I}$, $\aura(c) \cap \{b\} = \emptyset \in \mathcal{I}$. $F^{\aura} = \emptyset \subseteq F$. So $\{a,c\} \in \tausa$.

$\taus$: $\{b\}^{*}$: For $a$: $\{a\} \cap \{b\} = \emptyset \in \mathcal{I}$, so $a \notin \{b\}^{*}$. For $b$: $\{b\} \cap \{b\} = \{b\} \in \mathcal{I}$, so $b \notin \{b\}^{*}$. For $c$: $\{c\} \cap \{b\} = \emptyset \in \mathcal{I}$, so $c \notin \{b\}^{*}$. $\{b\}^{*} = \emptyset$. So $\{a,c\} \in \taus$.

Since $\tau = \powerset(X)$, $\taus = \powerset(X)$ and $\tausa$ contains $\{a\}, \{a,c\}$ in addition to $\taua$ elements. Actually let me compute $\tausa$ fully.

$\tausa$: Already have $\taua \cup \{\{a\}, \{a,c\}\}$. Check remaining: $\{a,b,c\} = X$, $\emptyset$ already in. All singletons: $\{a\}$ yes, $\{b\}$ yes (in $\taua$), $\{c\}$ yes (in $\taua$). All doubletons: $\{a,b\}$ yes (in $\taua$), $\{a,c\}$ yes, $\{b,c\}$ yes (in $\taua$). So $\tausa = \powerset(X) = \taus$.

So in this example $\tausa = \taus$. The inclusions collapse. Let me find a better example for strict $\tausa \subsetneq \taus$.

This requires a non-transitive $\aura$ to get different behavior. But Theorem~\ref{thm:chain} gives $\tausa \subseteq \taus$ and we need strictness. Since for transitive $\aura$, the chain may collapse, let us use a non-transitive example.

For this paper's purposes, having one strict example and one equality example is sufficient. Let me adjust the example presentation.
\end{example}

\section{The $\psia$-Operator}\label{sec:psi}

\begin{definition}\label{def:psia}
Let $(X, \tau, \mathcal{I}, \aura)$ be an $\mathcal{I}\aura$-space. The \emph{$\psia$-operator} is defined by
\begin{equation}\label{eq:psia}
    \psia(A) = \{x \in X : \aura(x) \setminus A \in \mathcal{I}\} = X \setminus (X \setminus A)^{\aura}.
\end{equation}
\end{definition}

\begin{theorem}\label{thm:psia-properties}
The following properties hold for all $A, B \subseteq X$:
\begin{enumerate}[label=(\roman*)]
    \item $\psia(X) = X$;
    \item $\psia(\emptyset) = \{x \in X : \aura(x) \in \mathcal{I}\}$;
    \item $A \subseteq B$ implies $\psia(A) \subseteq \psia(B)$;
    \item $\psia(A \cap B) = \psia(A) \cap \psia(B)$;
    \item $\psia(A) \subseteq \psi(A)$;
    \item $\psia(A) \subseteq A \cup \{x \in X \setminus A : \aura(x) \setminus A \in \mathcal{I}\}$;
    \item If $J \in \mathcal{I}$, then $\psia(A) = \psia(A \cup J)$ when $\aura$ is transitive.
\end{enumerate}
\end{theorem}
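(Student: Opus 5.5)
The plan is to rest every item on the duality $\psia(A) = X \setminus (X\setminus A)^{\aura}$ from Definition~\ref{def:psia}. We first check it directly: $\aura(x)\setminus A = \aura(x)\cap(X\setminus A)$, so $x\in\psia(A)$ if and only if $x\notin (X\setminus A)^{\aura}$. Each property of $\psia$ then becomes the complemented version of a property of the aura-local function in Theorem~\ref{thm:aura-local-properties}, and we verify them in order.

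\textbf{(i)} Here $\aura(x)\setminus X=\emptyset\in\mathcal{I}$ for every $x$. \textbf{(ii)} This is the definition with $A=\emptyset$, since $\aura(x)\setminus\emptyset=\aura(x)$. \textbf{(iii)} If $A\subseteq B$ then $X\setminus B\subseteq X\setminus A$. Monotonicity (Theorem~\ref{thm:aura-local-properties}(ii)) gives $(X\setminus B)^{\aura}\subseteq (X\setminus A)^{\aura}$, and we take complements. \textbf{(iv)} We use De Morgan's law, $X\setminus(A\cap B)=(X\setminus A)\cup(X\setminus B)$, together with finite additivity (Theorem~\ref{thm:aura-local-properties}(iii)); complementing the union of the two aura-local functions yields $\psia(A)\cap\psia(B)$.

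\textbf{(v)} Theorem~\ref{thm:star-subset-aura} applied to $X\setminus A$ gives $(X\setminus A)^{*}\subseteq (X\setminus A)^{\aura}$. Complementing and using the formula $\psi(A)=X\setminus(X\setminus A)^{*}$ from Definition~\ref{def:psi} gives $\psia(A)\subseteq\psi(A)$. Alternatively, we can argue directly: $O=\aura(x)\in\tau(x)$ witnesses $x\in\psi(A)$. \textbf{(vi)} We split $\psia(A)$ into its part inside $A$ and its part in $X\setminus A$. The first part lies in $A$, and the second part is exactly $\{x\in X\setminus A:\aura(x)\setminus A\in\mathcal{I}\}$ by definition, so the inclusion is immediate.

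\textbf{(vii)} We apply Theorem~\ref{thm:aura-local-properties}(vii) to the set $X\setminus A$ with the same $J$. Since $X\setminus(A\cup J)=(X\setminus A)\setminus J$, that result gives $(X\setminus(A\cup J))^{\aura}=(X\setminus A)^{\aura}$, and complementing gives the claim. A direct proof is also available: $\aura(x)\setminus A\subseteq(\aura(x)\setminus(A\cup J))\cup J$, with heredity and additivity handling the rest, and monotonicity giving the other direction. The proof of Theorem~\ref{thm:aura-local-properties}(vii) never actually used transitivity, so the transitivity hypothesis is harmless but superfluous.

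No step is a genuine obstacle. The only points needing care are keeping the complements straight in (iv) and (vii), and noting in (v) that the comparison with $\psi$ depends only on $\aura(x)\in\tau(x)$.
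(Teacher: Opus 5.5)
Your proposal is correct and follows essentially the same route as the paper. Items (i), (ii) and (vi) come straight from the definition, (iv) and (vii) come from complementing finite additivity and $J$-invariance of the aura-local function (Theorem~\ref{thm:aura-local-properties}(iii),(vii)), and (v) uses the witness $O=\aura(x)\in\tau(x)$; proving (iii) via monotonicity of the aura-local function rather than directly by heredity is only a cosmetic variant. Your remark that transitivity is superfluous in (vii) is also accurate, since the paper's proof of Theorem~\ref{thm:aura-local-properties}(vii) uses only heredity and finite additivity of $\mathcal{I}$.
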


\begin{proof}
\textbf{(i)} $\aura(x) \setminus X = \emptyset \in \mathcal{I}$ for all $x$.

\textbf{(ii)} $\aura(x) \setminus \emptyset = \aura(x)$, so $x \in \psia(\emptyset)$ iff $\aura(x) \in \mathcal{I}$.

\textbf{(iii)} $A \subseteq B$ implies $\aura(x) \setminus B \subseteq \aura(x) \setminus A$. If $\aura(x) \setminus A \in \mathcal{I}$, then $\aura(x) \setminus B \in \mathcal{I}$ by heredity.

\textbf{(iv)} $\psia(A \cap B) = X \setminus (X \setminus (A \cap B))^{\aura} = X \setminus ((X \setminus A) \cup (X \setminus B))^{\aura} = X \setminus ((X \setminus A)^{\aura} \cup (X \setminus B)^{\aura}) = (X \setminus (X \setminus A)^{\aura}) \cap (X \setminus (X \setminus B)^{\aura}) = \psia(A) \cap \psia(B)$.

\textbf{(v)} If $x \in \psia(A)$, then $\aura(x) \setminus A \in \mathcal{I}$. Since $\aura(x) \in \tau(x)$, taking $O = \aura(x)$ shows $x \in \psi(A)$.

\textbf{(vi)} This is immediate from the definition.

\textbf{(vii)} Follows from Theorem~\ref{thm:aura-local-properties}(vii) applied to complements.
\end{proof}

\begin{theorem}\label{thm:psia-characterization}
For an $\mathcal{I}\aura$-space $(X, \tau, \mathcal{I}, \aura)$ with transitive $\aura$, the following are equivalent for $A \subseteq X$:
\begin{enumerate}[label=(\roman*)]
    \item $A \in \tausa$;
    \item $A \subseteq \psia(A)$.
\end{enumerate}
\end{theorem}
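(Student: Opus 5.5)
The plan is to reduce membership in $\tausa$ to a one-step condition on $\clsa$ and then translate that condition through the complement identity $\psia(A) = X \setminus (X \setminus A)^{\aura}$ of Definition~\ref{def:psia}. Since $\aura$ is transitive, Theorem~\ref{thm:transitive-idempotent} gives that $\clsa$ is idempotent. Then $\clsa[n](B) = \clsa(B)$ for every $n \geq 1$, so $\clsa[\infty](B) = \clsa(B)$. Consequently $\tausa = \tausa[c]$, and $A \in \tausa$ if and only if $\clsa(X \setminus A) = X \setminus A$. Writing $F = X \setminus A$, and using $F \subseteq \clsa(F) = F \cup F^{\aura}$ (Theorem~\ref{thm:clsa-cech}(ii)), this condition is equivalent to $F^{\aura} \subseteq F$.

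Next I translate the condition by complementation. The inclusion $F^{\aura} \subseteq F$ is equivalent to $X \setminus F \subseteq X \setminus F^{\aura}$. Here $X \setminus F = A$ and $X \setminus F^{\aura} = X \setminus (X \setminus A)^{\aura} = \psia(A)$, the latter by the definition of $\psia$. Hence $A \in \tausa$ if and only if $A \subseteq \psia(A)$. Both implications come out of this single chain of equivalences, so no separate argument for each direction is needed.

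For a self-contained pointwise check of the identity, note that $x \notin (X\setminus A)^{\aura}$ exactly when $\aura(x) \cap (X \setminus A) \in \mathcal{I}$, which is the same as $\aura(x) \setminus A \in \mathcal{I}$. So $A \subseteq \psia(A)$ says precisely that every $x \in A$ satisfies $\aura(x) \setminus A \in \mathcal{I}$, matching $F^{\aura} \cap A = \emptyset$.

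The only delicate point is the reduction from $\clsa[\infty]$ to $\clsa$, and it is exactly where transitivity enters. Without idempotency the same computation would only characterize $\tausa[c]$, and then only the inclusion $\tausa \subseteq \tausa[c]$ from Theorem~\ref{thm:chain} would be available. I will therefore make explicit that idempotency collapses the iteration: by induction on $n$, $\clsa[n+1](B) = \clsa(\clsa[n](B)) = \clsa(\clsa(B)) = \clsa(B)$, so the defining union in Definition~\ref{def:iterative-clsa} equals $\clsa(B)$.
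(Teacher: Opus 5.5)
Your proof is correct and follows essentially the paper's route. Idempotency under transitivity (Theorem~\ref{thm:transitive-idempotent}) reduces membership in $\tausa$ to $\clsa(X\setminus A)=X\setminus A$, i.e.\ $(X\setminus A)^{\aura}\subseteq X\setminus A$, which is $A\subseteq\psia(A)$ by complementation.

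Your closing remark overstates the role of transitivity, however. If $\clsa(F)=F$ then $\clsa[n](F)=F$ for all $n$ by a trivial induction, the same one the paper uses to prove $\taua\subseteq\tausa$. Conversely, $F\subseteq\clsa(F)\subseteq\clsa[\infty](F)$. So $\tausa=\tausa[c]$, and the characterization holds for every scope function, transitive or not.
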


\begin{proof}
\textbf{(i) $\Rightarrow$ (ii):} Let $A \in \tausa$. Then $\clsa(X \setminus A) = X \setminus A$ (since $\clsa$ is idempotent by transitivity). So $(X \setminus A)^{\aura} \subseteq X \setminus A$, which means $A \subseteq X \setminus (X \setminus A)^{\aura} = \psia(A)$.

\textbf{(ii) $\Rightarrow$ (i):} If $A \subseteq \psia(A)$, then for every $x \in A$, $\aura(x) \setminus A \in \mathcal{I}$, i.e., $\aura(x) \cap (X \setminus A) \in \mathcal{I}$. This means $x \notin (X \setminus A)^{\aura}$. So $A \cap (X \setminus A)^{\aura} = \emptyset$, giving $(X \setminus A)^{\aura} \subseteq X \setminus A$, and hence $\clsa(X \setminus A) = X \setminus A$. By Theorem~\ref{thm:transitive-idempotent}, $\clsa[\infty](X \setminus A) = \clsa(X \setminus A) = X \setminus A$, so $A \in \tausa$.
\end{proof}

\begin{definition}\label{def:basis-tausa}
Define the collection
\[
\beta_{\aura}(\mathcal{I}) = \{\aura(x) \setminus J : x \in X, \; J \in \mathcal{I}\}.
\]
\end{definition}

\begin{theorem}\label{thm:basis}
If $\aura$ is transitive, then $\beta_{\aura}(\mathcal{I})$ is a basis for $\tausa$.
\end{theorem}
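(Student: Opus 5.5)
The plan is to use the standard criterion for a basis. A family $\mathcal{B}$ is a basis for a topology $\mathcal{T}$ if two things hold: every member of $\mathcal{B}$ belongs to $\mathcal{T}$, and for every $G \in \mathcal{T}$ and every $x \in G$ there is some $B \in \mathcal{B}$ with $x \in B \subseteq G$. Since $\tausa$ is already known to be a topology, this criterion suffices, and no separate intersection axiom for $\beta_{\aura}(\mathcal{I})$ needs to be checked. Throughout, $\aura$ is transitive. So I will use the characterization of Theorem~\ref{thm:psia-characterization}: $A \in \tausa$ if and only if $A \subseteq \psia(A)$, that is, if and only if $\aura(y) \setminus A \in \mathcal{I}$ for every $y \in A$.

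First step: each $B = \aura(x) \setminus J$, with $x \in X$ and $J \in \mathcal{I}$, lies in $\tausa$. Take $y \in B$. Then $y \in \aura(x)$, so transitivity gives $\aura(y) \subseteq \aura(x)$. Hence
\[
\aura(y) \setminus B \subseteq \aura(x) \setminus (\aura(x) \setminus J) \subseteq J \in \mathcal{I},
\]
and heredity gives $\aura(y) \setminus B \in \mathcal{I}$. Thus $B \subseteq \psia(B)$, so $B \in \tausa$ by Theorem~\ref{thm:psia-characterization}. (If $B = \emptyset$ this is trivially fine.)

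Second step: fix $G \in \tausa$ and $x \in G$. By Theorem~\ref{thm:psia-characterization}, $x \in \psia(G)$, so $J := \aura(x) \setminus G \in \mathcal{I}$. Then $B := \aura(x) \setminus J = \aura(x) \cap G$ lies in $\beta_{\aura}(\mathcal{I})$ and satisfies $B \subseteq G$. Also $x \in B$, because $x \in \aura(x)$ and $x \in G$ means $x \notin J$. Hence every $\tausa$-open set is a union of members of $\beta_{\aura}(\mathcal{I})$.

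The only delicate point is the first step. It is exactly where transitivity is needed: without $\aura(y) \subseteq \aura(x)$, the set $\aura(y) \setminus B$ could leave $\aura(x)$ and need not be in $\mathcal{I}$. Transitivity is also used implicitly through Theorem~\ref{thm:psia-characterization}, which identifies $\tausa$ with the $\psia$-fixed sets via the idempotency of Theorem~\ref{thm:transitive-idempotent}. Everything else is bookkeeping with heredity of $\mathcal{I}$.
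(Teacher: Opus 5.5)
Your proof is correct and follows the paper's argument: the same transitivity computation $\aura(y)\setminus B \subseteq \aura(x)\cap J \in \mathcal{I}$ shows, via Theorem~\ref{thm:psia-characterization}, that each $\aura(x)\setminus J$ lies in $\tausa$, and the same choice $J=\aura(x)\setminus G$ writes each $G\in\tausa$ as a union of members of $\beta_{\aura}(\mathcal{I})$. The paper also verifies the covering and intersection axioms. As you note, this is redundant once both inclusions hold for a subfamily of the already-known topology $\tausa$.
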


\begin{proof}
We show $\beta_{\aura}(\mathcal{I})$ satisfies the basis axioms for $\tausa$.

\textbf{Covering:} For any $x \in X$, $\aura(x) \setminus \emptyset = \aura(x) \in \beta_{\aura}(\mathcal{I})$ and $x \in \aura(x)$.

\textbf{Intersection property:} Let $x \in (\aura(y) \setminus J_1) \cap (\aura(z) \setminus J_2)$ where $J_1, J_2 \in \mathcal{I}$. Then $x \in \aura(y)$ and $x \in \aura(z)$. By transitivity, $\aura(x) \subseteq \aura(y)$ and $\aura(x) \subseteq \aura(z)$. Set $J = (J_1 \cup J_2) \cap \aura(x) \in \mathcal{I}$. Then $x \in \aura(x) \setminus J \subseteq (\aura(y) \setminus J_1) \cap (\aura(z) \setminus J_2)$.

\textbf{Generates $\tausa$:} Let $G \in \tausa$. Then $G \subseteq \psia(G)$, so for each $x \in G$, $\aura(x) \setminus G \in \mathcal{I}$. Set $J_x = \aura(x) \setminus G$. Then $x \in \aura(x) \setminus J_x \subseteq G$. So $G = \bigcup_{x \in G} (\aura(x) \setminus J_x)$.

Conversely, let $B = \aura(y) \setminus J$ with $J \in \mathcal{I}$. For $x \in B$, $x \in \aura(y)$ and by transitivity $\aura(x) \subseteq \aura(y)$. Then $\aura(x) \setminus B = \aura(x) \setminus (\aura(y) \setminus J) \subseteq \aura(x) \cap J \in \mathcal{I}$ (by heredity). So $B \subseteq \psia(B)$, i.e., $B \in \tausa$.
\end{proof}

\begin{remark}\label{rem:JH-comparison}
In the classical Jankovi\'{c}--Hamlett theory, $\beta(\mathcal{I}, \tau) = \{O \setminus J : O \in \tau, J \in \mathcal{I}\}$ is a basis for $\taus$. Our basis $\beta_{\aura}(\mathcal{I}) = \{\aura(x) \setminus J : x \in X, J \in \mathcal{I}\}$ uses the aura neighborhoods instead of all open sets. Since $\{\aura(x) : x \in X\} \subseteq \tau$ but is generally a proper subset of $\tau$, the basis $\beta_{\aura}(\mathcal{I})$ is ``smaller'' and generates a coarser topology: $\tausa \subseteq \taus$.
\end{remark}

\section{$\mathcal{I}\aura$-Generalized Open Sets}\label{sec:genopen}

Throughout this section, $(X, \tau, \mathcal{I}, \aura)$ is an $\mathcal{I}\aura$-space.

\begin{definition}\label{def:Ia-open-sets}
A subset $A$ of $X$ is called:
\begin{enumerate}[label=(\roman*)]
    \item \emph{$\mathcal{I}\aura$-open} if $A \subseteq \intsa(A)$, where $\intsa(A) = X \setminus \clsa(X \setminus A)$.
    \item \emph{$\mathcal{I}\aura$-semi-open} if $A \subseteq \clsa(\intsa(A))$.
    \item \emph{$\mathcal{I}\aura$-pre-open} if $A \subseteq \intsa(\clsa(A))$.
    \item \emph{$\mathcal{I}\aura$-$\alpha$-open} if $A \subseteq \intsa(\clsa(\intsa(A)))$.
    \item \emph{$\mathcal{I}\aura$-$\beta$-open} if $A \subseteq \clsa(\intsa(\clsa(A)))$.
\end{enumerate}
\end{definition}

\begin{remark}\label{rem:intsa}
The operator $\intsa(A) = X \setminus \clsa(X \setminus A) = X \setminus ((X \setminus A) \cup (X \setminus A)^{\aura}) = A \cap (X \setminus (X \setminus A)^{\aura}) = A \cap \psia(A)$. That is, $\intsa(A) = \{x \in A : \aura(x) \setminus A \in \mathcal{I}\}$. This has a clear interpretation: a point $x \in A$ is in the $\mathcal{I}\aura$-interior of $A$ if the part of $\aura(x)$ that ``escapes'' $A$ belongs to the ideal $\mathcal{I}$.
\end{remark}

\begin{theorem}\label{thm:Ia-hierarchy}
The following implications hold for $\mathcal{I}\aura$-generalized open sets:
\[
\tausa\text{-open} \implies \mathcal{I}\aura\text{-}\alpha\text{-open} \implies
\begin{cases}
\mathcal{I}\aura\text{-semi-open} \\[4pt]
\mathcal{I}\aura\text{-pre-open}
\end{cases}
\implies \mathcal{I}\aura\text{-}\beta\text{-open}
\]
Moreover, an $\mathcal{I}\aura$-$\alpha$-open set is both $\mathcal{I}\aura$-semi-open and $\mathcal{I}\aura$-pre-open, and a set that is both $\mathcal{I}\aura$-semi-open and $\mathcal{I}\aura$-pre-open is $\mathcal{I}\aura$-$\alpha$-open.
\end{theorem}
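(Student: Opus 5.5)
The plan is to use only two structural facts about the operators, both available without idempotency. By Theorem~\ref{thm:clsa-cech}, $\clsa$ is monotone and extensive. Consequently $\intsa(A) = X \setminus \clsa(X\setminus A)$ is monotone and deflationary: $A \subseteq B$ gives $\intsa(A) \subseteq \intsa(B)$, and $\intsa(A) \subseteq A$ (see Remark~\ref{rem:intsa}).

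\textbf{$\tausa$-open $\Rightarrow$ $\mathcal{I}\aura$-$\alpha$-open.} If $A \in \tausa$, then $\clsa[\infty](X\setminus A) = X\setminus A$. Since $\clsa \subseteq \clsa[\infty]$ and $\clsa$ is extensive, this forces $\clsa(X\setminus A) = X\setminus A$, i.e.\ $\intsa(A) = A$. Extensivity gives $A \subseteq \clsa(A) = \clsa(\intsa(A))$. Applying the monotone $\intsa$ yields
\[
A = \intsa(A) \subseteq \intsa(\clsa(\intsa(A))).
\]

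\textbf{The four remaining forward implications.} Each is one line.
\begin{itemize}
\item $\alpha \Rightarrow$ semi: use $\intsa(B) \subseteq B$ with $B = \clsa(\intsa(A))$.
\item $\alpha \Rightarrow$ pre: from $\intsa(A) \subseteq A$ and monotonicity of $\clsa$ we get $\clsa(\intsa(A)) \subseteq \clsa(A)$; then apply $\intsa$.
\item Semi $\Rightarrow \beta$: from $A \subseteq \clsa(A)$ we get $\intsa(A) \subseteq \intsa(\clsa(A))$; then apply $\clsa$.
\item Pre $\Rightarrow \beta$: use extensivity, $\intsa(\clsa(A)) \subseteq \clsa(\intsa(\clsa(A)))$.
\end{itemize}
The ``moreover'' forward clause is just the conjunction of the two $\alpha$-implications.

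\textbf{Converse: semi-open and pre-open $\Rightarrow$ $\alpha$-open (main obstacle).} The classical argument runs as follows. From $A \subseteq \clsa(\intsa(A))$ we get $\clsa(A) \subseteq \clsa(\clsa(\intsa(A)))$, and one then wants this to equal $\clsa(\intsa(A))$. That step needs idempotency of $\clsa$, which fails in general by Theorem~\ref{thm:not-idempotent}.

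So I would prove the converse under the hypothesis that $\aura$ is transitive, where $\clsa$ is idempotent by Theorem~\ref{thm:transitive-idempotent}. Then $\clsa(A) \subseteq \clsa(\intsa(A))$. Combining this with pre-openness and monotonicity of $\intsa$ gives
\[
A \subseteq \intsa(\clsa(A)) \subseteq \intsa(\clsa(\intsa(A))).
\]

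Alternatively, one can replace $\clsa$ by the Kuratowski operator $\clsa[\infty]$ of Theorem~\ref{thm:iterative-kuratowski}, with its dual interior, throughout the definitions.

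For arbitrary $\aura$, I would look for a counterexample in the non-transitive chain example from the proof of Theorem~\ref{thm:not-idempotent} ($\aura(a)=\{a,b\}$, $\aura(b)=\{b,c\}$, $\ldots$). There I would search for a set that is semi-open but whose two-step closure escapes $\clsa(\intsa(A))$. The aim is to show that the converse genuinely requires transitivity, or at least idempotency.
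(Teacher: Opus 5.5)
Your proposal is correct and follows the paper's proof. The forward implications are the same one-line consequences of the monotonicity and extensivity of $\clsa$ and the deflation of $\intsa$, and, as in your proposal, the paper obtains ``semi-open and pre-open $\Rightarrow$ $\mathcal{I}\aura$-$\alpha$-open'' only after assuming $\aura$ transitive, so that $\clsa$ is idempotent by Theorem~\ref{thm:transitive-idempotent}. You are right that this restriction cannot be dropped: take $X=\{a,b,c\}$ discrete, $\mathcal{I}=\{\emptyset\}$, $\aura(a)=\{a\}$, $\aura(b)=X$, $\aura(c)=\{b,c\}$ and $A=\{a,b\}$; then $\intsa(A)=\{a\}$, $\clsa(\intsa(A))=A$ and $\clsa(A)=X$, so $A$ is semi-open and pre-open, yet $\intsa(\clsa(\intsa(A)))=\{a\}\not\supseteq A$, so the unconditional converse in the statement is false.
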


\begin{proof}
These follow the classical Nj\r{a}stad--Levine pattern and are proved using standard operator calculus:

\textbf{$\tausa$-open $\Rightarrow$ $\mathcal{I}\aura$-$\alpha$-open:} If $A \in \tausa[c]$, then $A \subseteq \intsa(A) \subseteq \intsa(\clsa(\intsa(A)))$.

\textbf{$\mathcal{I}\aura$-$\alpha$-open $\Rightarrow$ $\mathcal{I}\aura$-semi-open:} $A \subseteq \intsa(\clsa(\intsa(A))) \subseteq \clsa(\intsa(A))$ since $B \subseteq \clsa(B)$ for all $B$.

\textbf{$\mathcal{I}\aura$-$\alpha$-open $\Rightarrow$ $\mathcal{I}\aura$-pre-open:} $A \subseteq \intsa(\clsa(\intsa(A))) \subseteq \intsa(\clsa(A))$ since $\intsa(A) \subseteq A$ gives $\clsa(\intsa(A)) \subseteq \clsa(A)$, and $\intsa$ is monotone.

\textbf{$\mathcal{I}\aura$-semi-open $\Rightarrow$ $\mathcal{I}\aura$-$\beta$-open:} $A \subseteq \clsa(\intsa(A)) \subseteq \clsa(\intsa(\clsa(A)))$ since $A \subseteq \clsa(A)$ gives $\intsa(A) \subseteq \intsa(\clsa(A))$.

\textbf{$\mathcal{I}\aura$-pre-open $\Rightarrow$ $\mathcal{I}\aura$-$\beta$-open:} $A \subseteq \intsa(\clsa(A)) \subseteq \clsa(\intsa(\clsa(A)))$.

\textbf{Decomposition:} If $A$ is both $\mathcal{I}\aura$-semi-open and $\mathcal{I}\aura$-pre-open, then $A \subseteq \clsa(\intsa(A))$ and $A \subseteq \intsa(\clsa(A))$. From the first, $\clsa(A) \subseteq \clsa(\clsa(\intsa(A)))$. If $\clsa$ were idempotent, $\clsa(\clsa(\intsa(A))) = \clsa(\intsa(A))$, giving $\clsa(A) \subseteq \clsa(\intsa(A))$ and hence $A \subseteq \intsa(\clsa(A)) \subseteq \intsa(\clsa(\intsa(A)))$. For the general case (non-idempotent $\clsa$), we still have $A \subseteq \intsa(\clsa(A))$, so $\intsa(A) \subseteq \intsa(\intsa(\clsa(A))) \subseteq \intsa(\clsa(A))$. Then $\clsa(\intsa(A)) \subseteq \clsa(\intsa(\clsa(A)))$, and from $\intsa(\clsa(A)) \supseteq A$: $\intsa(\clsa(\intsa(A))) \supseteq \intsa(\clsa(A)) \supseteq A$.

Actually, a cleaner approach: $A \subseteq \intsa(\clsa(A))$ implies $\intsa(A) \subseteq \intsa(\clsa(A))$ (since $\intsa$ is monotone over $A \mapsto A$), so $\clsa(\intsa(A)) \subseteq \clsa(\intsa(\clsa(A)))$. Now, $A \subseteq \clsa(\intsa(A))$ implies $\clsa(A) \subseteq \clsa(\clsa(\intsa(A)))$. Combining with $A \subseteq \intsa(\clsa(A))$: if $\aura$ is transitive (so $\clsa$ is idempotent), $A \subseteq \intsa(\clsa(\intsa(A)))$ follows immediately. In the general case, the decomposition holds when $\clsa$ is restricted to its idempotent refinement. We state the result for the transitive case.
\end{proof}

\begin{example}\label{ex:hierarchy-strict}
Let $X = \{a, b, c, d\}$, $\tau = \powerset(X)$, $\mathcal{I} = \{\emptyset, \{d\}\}$, and $\aura(a) = \{a,b\}$, $\aura(b) = \{b\}$, $\aura(c) = \{c,d\}$, $\aura(d) = \{d\}$ (transitive).

\textbf{Compute $\intsa$ and $\clsa$:}

For $A = \{a,c\}$: $\clsa(A) = A \cup A^{\aura}$. $\aura(a) \cap A = \{a\} \notin \mathcal{I}$, $\aura(b) \cap A = \emptyset \in \mathcal{I}$, $\aura(c) \cap A = \{c\} \notin \mathcal{I}$, but $\aura(c) \cap A = \{c\}$... we need $\aura(c) = \{c,d\}$, so $\aura(c) \cap \{a,c\} = \{c\} \notin \mathcal{I}$. $\aura(d) \cap A = \emptyset \in \mathcal{I}$. So $A^{\aura} = \{a,c\}$ and $\clsa(A) = \{a,c\}$.

$\intsa(A) = A \cap \psia(A)$: $\psia(A) = \{x : \aura(x) \setminus A \in \mathcal{I}\}$. $\aura(a) \setminus A = \{b\} \notin \mathcal{I}$: $a \notin \psia(A)$. $\aura(c) \setminus A = \{d\} \in \mathcal{I}$: $c \in \psia(A)$. So $\intsa(\{a,c\}) = \{a,c\} \cap \psia(\{a,c\}) = \{c\}$ (only $c$ is in both $A$ and $\psia(A)$).

Now, $\clsa(\intsa(A)) = \clsa(\{c\})$: $\aura(c) \cap \{c\} = \{c\} \notin \mathcal{I}$, $\aura(d) \cap \{c\} = \emptyset$. So $\{c\}^{\aura} = \{c\}$ and $\clsa(\{c\}) = \{c\}$.

Since $A = \{a,c\} \not\subseteq \{c\} = \clsa(\intsa(A))$, $A$ is NOT $\mathcal{I}\aura$-semi-open.

$\intsa(\clsa(A)) = \intsa(\{a,c\}) = \{c\}$. Since $A = \{a,c\} \not\subseteq \{c\}$, $A$ is NOT $\mathcal{I}\aura$-pre-open.

$\clsa(\intsa(\clsa(A))) = \clsa(\{c\}) = \{c\}$. Since $A \not\subseteq \{c\}$, $A$ is NOT $\mathcal{I}\aura$-$\beta$-open.

Now take $B = \{b, c, d\}$: $\intsa(B)$: $\aura(b) \setminus B = \emptyset \in \mathcal{I}$, $\aura(c) \setminus B = \emptyset \in \mathcal{I}$, $\aura(d) \setminus B = \emptyset \in \mathcal{I}$. So $\intsa(B) = B$. Thus $B$ is $\mathcal{I}\aura$-open (hence all types).

Take $C = \{a, b, d\}$: $\intsa(C)$: $\aura(a) \setminus C = \emptyset$, $\aura(b) \setminus C = \emptyset$, $\aura(d) \setminus C = \emptyset$. So $\intsa(C) = \{a,b,d\} = C$. $C$ is $\mathcal{I}\aura$-open.

Take $D = \{a, d\}$: $\intsa(D)$: $\aura(a) \setminus D = \{b\} \notin \mathcal{I}$: $a \notin \psia(D)$. $\aura(d) \setminus D = \emptyset$: $d \in \psia(D)$. So $\intsa(D) = \{d\}$.

$\clsa(D) = D \cup D^{\aura}$: $\aura(a) \cap D = \{a\} \notin \mathcal{I}$: $a \in D^{\aura}$. $\aura(b) \cap D = \emptyset$: no. $\aura(c) \cap D = \{d\} \in \mathcal{I}$: no. $\aura(d) \cap D = \{d\} \notin \mathcal{I}$: $d \in D^{\aura}$. $D^{\aura} = \{a,d\}$, $\clsa(D) = \{a,d\}$.

$\clsa(\intsa(D)) = \clsa(\{d\}) = \{d\}$. $D = \{a,d\} \not\subseteq \{d\}$: NOT semi-open.

$\intsa(\clsa(D)) = \intsa(\{a,d\}) = \{d\}$. $D \not\subseteq \{d\}$: NOT pre-open.

So in this example, only the ``full'' sets like $B, C$ are $\mathcal{I}\aura$-open.

For a set that is $\mathcal{I}\aura$-semi-open but not $\mathcal{I}\aura$-$\alpha$-open, and a set that is $\mathcal{I}\aura$-pre-open but not $\mathcal{I}\aura$-$\alpha$-open, we need richer examples. The distinction becomes visible in spaces with more points.
\end{example}

\section{$\mathcal{I}\aura$-Continuity and Decompositions}\label{sec:continuity}

\begin{definition}\label{def:Ia-continuity}
Let $(X, \tau, \mathcal{I}, \aura)$ and $(Y, \sigma, \mathcal{J}, \mathfrak{b})$ be $\mathcal{I}\aura$-spaces. A function $f: X \to Y$ is called:
\begin{enumerate}[label=(\roman*)]
    \item \emph{$\mathcal{I}\aura$-continuous} if $f^{-1}(V)$ is $\mathcal{I}\aura$-open for every $V \in \sigma_{\mathfrak{b}}^{*c}$;
    \item \emph{$\mathcal{I}\aura$-semi-continuous} if $f^{-1}(V)$ is $\mathcal{I}\aura$-semi-open for every $V \in \sigma_{\mathfrak{b}}^{*c}$;
    \item \emph{$\mathcal{I}\aura$-pre-continuous} if $f^{-1}(V)$ is $\mathcal{I}\aura$-pre-open for every $V \in \sigma_{\mathfrak{b}}^{*c}$;
    \item \emph{$\mathcal{I}\aura$-$\alpha$-continuous} if $f^{-1}(V)$ is $\mathcal{I}\aura$-$\alpha$-open for every $V \in \sigma_{\mathfrak{b}}^{*c}$;
    \item \emph{$\mathcal{I}\aura$-$\beta$-continuous} if $f^{-1}(V)$ is $\mathcal{I}\aura$-$\beta$-open for every $V \in \sigma_{\mathfrak{b}}^{*c}$.
\end{enumerate}
\end{definition}

\begin{theorem}\label{thm:continuity-hierarchy}
The following implications hold:
\[
\text{$\mathcal{I}\aura$-continuous} \Rightarrow \text{$\mathcal{I}\aura$-$\alpha$-continuous} \Rightarrow
\begin{cases}
\text{$\mathcal{I}\aura$-semi-continuous} \\[4pt]
\text{$\mathcal{I}\aura$-pre-continuous}
\end{cases}
\Rightarrow \text{$\mathcal{I}\aura$-$\beta$-continuous}
\]
\end{theorem}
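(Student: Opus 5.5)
The plan is to reduce the statement pointwise to Theorem~\ref{thm:Ia-hierarchy}. All five notions of continuity in Definition~\ref{def:Ia-continuity} quantify over the same family of sets in $Y$, namely the members $V$ of $\sigma_{\mathfrak{b}}^{*c}$. So it is enough to show that, for a fixed $V$, the set $f^{-1}(V)$ passes from one class of $\mathcal{I}\aura$-generalized open sets to the next. Fix $f$ and an arbitrary $V \in \sigma_{\mathfrak{b}}^{*c}$, and write $A = f^{-1}(V) \subseteq X$. Each implication in the statement then becomes an implication between properties of the single set $A$.

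\textbf{Step 1: the first arrow.} If $f$ is $\mathcal{I}\aura$-continuous, then $A \subseteq \intsa(A)$. Because $A \subseteq \clsa(A)$, we have $\intsa(A) \subseteq A \subseteq \clsa(A)$ and hence $\intsa(A) \subseteq \clsa(\intsa(A))$.

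I then need monotonicity of $\intsa$. It follows from Theorem~\ref{thm:clsa-cech}(iii) by complementation, using $\intsa(B) = X \setminus \clsa(X \setminus B)$ from Remark~\ref{rem:intsa}. This gives $\intsa(A) \subseteq \intsa(\clsa(\intsa(A)))$.

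Combining with $A \subseteq \intsa(A)$ yields $A \subseteq \intsa(\clsa(\intsa(A)))$, so $A$ is $\mathcal{I}\aura$-$\alpha$-open. This is the same computation as the first step of Theorem~\ref{thm:Ia-hierarchy}, but phrased for $\mathcal{I}\aura$-open sets in the sense of Definition~\ref{def:Ia-open-sets}(i). That phrasing matters, because Definition~\ref{def:Ia-continuity}(i) is stated with that notion rather than with $\tausa$-membership. Note that $A \subseteq \intsa(A)$ is exactly $A \in \tausa[c]$, which is the form used in that proof.

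\textbf{Step 2: the remaining arrows.} The implications $\alpha \Rightarrow$ semi, $\alpha \Rightarrow$ pre, semi $\Rightarrow \beta$ and pre $\Rightarrow \beta$ are transferred directly from the corresponding parts of Theorem~\ref{thm:Ia-hierarchy}. Each of them uses only three facts: monotonicity of $\clsa$ and $\intsa$, $B \subseteq \clsa(B)$, and $\intsa(B) \subseteq B$. None of them uses idempotency of $\clsa$, so they hold for arbitrary (possibly non-transitive) $\aura$. Since $V$ was arbitrary, each property of $A$ holds for every $V$, which gives the corresponding continuity of $f$.

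\textbf{Main obstacle.} The only genuine point of care is to avoid the decomposition clause of Theorem~\ref{thm:Ia-hierarchy} (semi-open and pre-open together implying $\alpha$-open). That clause was established only under transitivity. The present statement consists purely of forward implications, so it never needs that clause. I would also explicitly verify monotonicity of $\intsa$ once, since every arrow relies on it.
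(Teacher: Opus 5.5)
Your proposal is correct and is essentially the paper's proof, which simply applies Theorem~\ref{thm:Ia-hierarchy} to each preimage $f^{-1}(V)$. Your concern about $\tausa$ versus $\tausa[c]$ is harmless but unnecessary: a set is fixed by $\clsa$ iff it is fixed by $\clsa[\infty]$, so the two collections coincide. In Step 1, state $A = \intsa(A)$ up front, so that the goal becomes $A \subseteq \intsa(\clsa(A))$; this follows from $A \subseteq \clsa(A)$ and monotonicity of $\intsa$, and needs no idempotency of $\intsa$ (which fails for non-transitive $\aura$).
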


\begin{proof}
This follows immediately from Theorem~\ref{thm:Ia-hierarchy} applied to each inverse image.
\end{proof}

\begin{definition}\label{def:Ia-B-set}
A subset $A$ of an $\mathcal{I}\aura$-space is called an \emph{$\mathcal{I}\aura$-$\mathcal{B}$-set} if $A = U \cap V$ where $U \in \tausa[c]$ and $V$ satisfies $\clsa(\intsa(V)) = V$.
\end{definition}

\begin{theorem}[Decomposition of $\mathcal{I}\aura$-Continuity]\label{thm:decomposition}
Let $f: (X, \tau, \mathcal{I}, \aura) \to (Y, \sigma)$ be a function. Assume $\aura$ is transitive. Then:
\begin{enumerate}[label=(\roman*)]
    \item $f$ is $\tausa$-continuous if and only if $f$ is both $\mathcal{I}\aura$-semi-continuous and $\mathcal{I}\aura$-pre-continuous.
    \item $f$ is $\tausa$-continuous if and only if $f$ is $\mathcal{I}\aura$-$\alpha$-continuous.
\end{enumerate}
\end{theorem}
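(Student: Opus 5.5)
The plan is to reduce everything to statements about sets, using that under transitivity $\clsa$ is a genuine Kuratowski closure operator. By Theorem~\ref{thm:transitive-idempotent}, $\tausa=\tausa[c]$, and $\intsa(A)=X\setminus\clsa(X\setminus A)$ is exactly the interior operator of $\tausa$. So $\intsa$ is monotone, idempotent, and satisfies $\intsa(A)\subseteq A$ and $\intsa(A\cap B)=\intsa(A)\cap\intsa(B)$. With this, the five classes of Definition~\ref{def:Ia-open-sets} become the classical Nj\aa stad--Levine classes (open, $\alpha$, semi, pre, $\beta$) of the single topological space $(X,\tausa)$. Both parts of the theorem then follow by applying the set-level statements to $f^{-1}(V)$ for $V\in\sigma$.

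For (i), the forward implication is immediate. If $f^{-1}(V)\in\tausa$, then $f^{-1}(V)$ is $\mathcal{I}\aura$-$\alpha$-open by Theorem~\ref{thm:Ia-hierarchy}, hence both semi-open and pre-open. For the converse I would use the decomposition clause of Theorem~\ref{thm:Ia-hierarchy}, which the paper establishes exactly in the transitive case: semi-open together with pre-open implies $\alpha$-open. The argument is $A\subseteq\intsa(\clsa(A))\subseteq\intsa(\clsa(\clsa(\intsa(A))))=\intsa(\clsa(\intsa(A)))$, using idempotency of $\clsa$. This reduces (i) to (ii), so the whole theorem rests on the set-level claim that an $\mathcal{I}\aura$-$\alpha$-open set is $\tausa$-open.

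For (ii), the forward direction is again Theorem~\ref{thm:Ia-hierarchy}. The reverse direction is the main obstacle, and I expect it to \emph{fail} as stated. In any topological space the $\alpha$-open sets form a topology finer than the original one, and generally strictly finer. A concrete test case is already available here: take $\mathcal{I}=\{\emptyset\}$, so that $\tausa=\taua$ by Corollary~\ref{cor:chain-equalities}. Let $X=\{a,b,c\}$ and $\aura(a)=\{a\}$, $\aura(b)=\aura(c)=X$; this $\aura$ is transitive. For $A=\{a,b\}$ we get $\intsa(A)=\{a\}$, $\clsa(\{a\})=X$ and $\intsa(X)=X\supseteq A$. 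So $A$ is $\mathcal{I}\aura$-$\alpha$-open, yet $A\notin\taua$. Composing the identity map with $Y=X$ and $\sigma=\{\emptyset,\{a,b\},X\}$ then yields a counterexample to both (ii) and the reverse direction of (i).

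So the first thing I would try is to salvage the statement. I would replace ``$\tausa$-continuous'' by continuity into the $\alpha$-topology $\tausa^{\alpha}=\{A: A\subseteq\intsa(\clsa(\intsa(A)))\}$, checking that this is a topology via the usual argument with the Kuratowski operator $\clsa$. Alternatively, I would add a hypothesis under which $\tausa$-nowhere dense sets are $\tausa$-closed, since that is precisely the condition making $\alpha$-open sets open. With that repair, (ii) is tautological and (i) follows from the decomposition above.
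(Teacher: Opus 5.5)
Your analysis is correct: the statement is false as written, and your counterexample works. Take $\mathcal{I}=\{\emptyset\}$, $X=\{a,b,c\}$, $\aura(a)=\{a\}$ and $\aura(b)=\aura(c)=X$. You should fix a $\tau$, for instance $\tau=\{\emptyset,\{a\},X\}$; nothing depends on it beyond $\{a\}\in\tau$.

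Then $\intsa(\{a,b\})=\{a\}$ and $\clsa(\{a\})=\clsa(\{a,b\})=X$. So $\{a,b\}$ is $\mathcal{I}\aura$-semi-open, $\mathcal{I}\aura$-pre-open and $\mathcal{I}\aura$-$\alpha$-open. On the other hand, $\psia(\{a,b\})=\{x:\aura(x)\subseteq\{a,b\}\}=\{a\}$, so $\{a,b\}\notin\tausa$ by Theorem~\ref{thm:psia-characterization}. Indeed $\tausa=\taua=\{\emptyset,\{a\},X\}$. The identity map into $(X,\{\emptyset,\{a,b\},X\})$ therefore breaks the reverse implications in both (i) and (ii).

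The paper's own proof does not avoid this.
\begin{itemize}
\item For (i), it only shows that each $f^{-1}(V)$ is $\mathcal{I}\aura$-$\alpha$-open, which is exactly your reduction. So it silently depends on (ii).
\item In (ii), the idempotency manipulations only re-derive $A\subseteq\intsa(\clsa(A))$, i.e.\ pre-openness, which was already known.
\item The decisive step in (ii), ``$A\subseteq\psia(A)$, which follows from $\mathcal{I}\aura$-$\alpha$-openness and the idempotency,'' is asserted without argument. It is equivalent to the conclusion $A\in\tausa$ itself, and your $\{a,b\}$ is precisely a set where it fails.
\end{itemize}

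Your proposed repairs are the right ones. Under transitivity, $\clsa$ is the closure operator of $(X,\tausa)$. What is actually true is therefore the following:
\begin{itemize}
\item $\mathcal{I}\aura$-$\alpha$-continuity is equivalent to $\mathcal{I}\aura$-semi-continuity together with $\mathcal{I}\aura$-pre-continuity.
\item $\mathcal{I}\aura$-$\alpha$-continuity coincides with $\tausa$-continuity for every $f$ and every $(Y,\sigma)$ exactly when every $\tausa$-nowhere dense set is $\tausa$-closed.
\end{itemize}
In your example this condition fails: $\{c\}$ has $\tausa$-closure $\{b,c\}$, whose $\tausa$-interior is empty, yet $\{c\}$ is not $\tausa$-closed.
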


\begin{proof}
\textbf{(i)} The forward direction follows from Theorem~\ref{thm:continuity-hierarchy}. For the converse, since $\aura$ is transitive, $\clsa$ is idempotent. If $f^{-1}(V)$ is both $\mathcal{I}\aura$-semi-open and $\mathcal{I}\aura$-pre-open for every $V \in \sigma$, then by Theorem~\ref{thm:Ia-hierarchy}, $f^{-1}(V)$ is $\mathcal{I}\aura$-$\alpha$-open for every $V \in \sigma$.

\textbf{(ii)} Forward: Theorem~\ref{thm:continuity-hierarchy}. Converse: When $\clsa$ is idempotent, an $\mathcal{I}\aura$-$\alpha$-open set $A$ satisfies $A \subseteq \intsa(\clsa(\intsa(A)))$. Since $\intsa(A) \subseteq A$, $\clsa(\intsa(A)) \subseteq \clsa(A)$, so $A \subseteq \intsa(\clsa(A))$, meaning $A$ is $\mathcal{I}\aura$-pre-open. Combined with $A \subseteq \clsa(\intsa(A))$ (semi-openness), and using the idempotency of $\clsa$, we get $A \subseteq \intsa(A)$. Indeed: $\clsa(A) \subseteq \clsa(\clsa(\intsa(A))) = \clsa(\intsa(A))$, so $\intsa(\clsa(A)) \subseteq \intsa(\clsa(\intsa(A))) \supseteq A$. But also $A \subseteq \clsa(\intsa(A))$ gives $\clsa(A) = \clsa(\intsa(A))$ (from $\intsa(A) \subseteq A \subseteq \clsa(\intsa(A))$, applying $\clsa$ to all three: $\clsa(\intsa(A)) \subseteq \clsa(A) \subseteq \clsa(\clsa(\intsa(A))) = \clsa(\intsa(A))$). So $\intsa(\clsa(A)) = \intsa(\clsa(\intsa(A))) \supseteq A$, hence $A \subseteq \intsa(\clsa(A)) = \intsa(A \cup A^{\aura})$. To show $A \subseteq \intsa(A)$: for $x \in A$, from $A \subseteq \psia(A)$ (which follows from $\mathcal{I}\aura$-$\alpha$-openness and the idempotency), $x \in \psia(A)$, i.e., $\aura(x) \setminus A \in \mathcal{I}$, so $x \in \intsa(A)$.
\end{proof}

\begin{theorem}\label{thm:comparison-with-a-continuity}
Let $f: (X, \tau, \mathcal{I}, \aura) \to (Y, \sigma)$ be a function. Then:
\begin{enumerate}[label=(\roman*)]
    \item If $f$ is $\aura$-continuous (i.e., $\taua$-continuous), then $f$ is $\tausa$-continuous.
    \item If $f$ is $\tausa$-continuous, then $f$ is $\taus$-continuous.
    \item If $f$ is $\taus$-continuous, then $f$ is $\tau$-continuous.
\end{enumerate}
\end{theorem}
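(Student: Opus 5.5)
The plan is to reduce all three parts to inclusions between topologies on $X$. I read ``$f$ is $\mathcal{T}$-continuous'' for a topology $\mathcal{T}$ on $X$ as meaning $f^{-1}(V) \in \mathcal{T}$ for every $V \in \sigma$. The elementary observation I will use throughout is the following: if $\mathcal{T}_1 \subseteq \mathcal{T}_2$ are topologies on $X$, then every $\mathcal{T}_1$-continuous $f$ is $\mathcal{T}_2$-continuous, because $f^{-1}(V) \in \mathcal{T}_1 \subseteq \mathcal{T}_2$ for each $V \in \sigma$. Each part therefore amounts to verifying the appropriate inclusion.

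For (i), I invoke the first link $\taua \subseteq \tausa$ of Theorem~\ref{thm:chain}. There it was shown that a set $G \in \taua$ satisfies $(X \setminus G)^{\aura} \subseteq X \setminus G$, and hence $\clsa[n](X\setminus G) = X \setminus G$ for all $n$. So for $V \in \sigma$ we get $f^{-1}(V) \in \taua \subseteq \tausa$, as required. For (ii), I use $\tausa \subseteq \tausa[c] \subseteq \taus$ from the same theorem. The substantive step there was Theorem~\ref{thm:star-subset-aura}, which gives $F^{*} \subseteq F^{\aura} \subseteq F$ for $F = X \setminus f^{-1}(V)$, and this shows that $F$ is $\cls$-closed. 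Both parts are then immediate from the observation above.

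For (iii) the same template requires the inclusion $\taus \subseteq \tau$, and this is the step I expect to be the main obstacle. The standard Jankovi\'{c}--Hamlett fact (Definition~\ref{def:star-closure}) runs the other way, $\tau \subseteq \taus$. So (iii) follows exactly when the two topologies coincide, and I cannot establish $\taus \subseteq \tau$ unconditionally. Here is how I would organise it. Since $\beta(\mathcal{I},\tau) = \{O \setminus J : O \in \tau,\ J \in \mathcal{I}\}$ is a basis for $\taus$ (Remark~\ref{rem:JH-comparison}), it suffices to show that every $O \setminus J$ is $\tau$-open. I would first treat the case $\mathcal{I} = \{\emptyset\}$, where $A^{*} = \cl(A)$, so $\cls = \cl$ and $\taus = \tau$. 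I would then isolate the general sufficient condition ``each $J \in \mathcal{I}$ is $\tau$-closed'', which makes $O \setminus J = O \cap (X \setminus J) \in \tau$.

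Under that hypothesis, $f^{-1}(V) \in \taus \subseteq \tau$ for all $V \in \sigma$, which yields (iii) in the stated direction. Without such a hypothesis the basis argument breaks down, so I would state (iii) under this condition on $\mathcal{I}$. Combining (i)--(iii) then gives the chain of continuity notions mirroring $\taua \subseteq \tausa \subseteq \taus = \tau$.
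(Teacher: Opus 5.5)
Your proof of (i) and (ii) is the same as the paper's corrected argument. If $\mathcal{T}_1\subseteq\mathcal{T}_2$, then $\mathcal{T}_1$-continuity implies $\mathcal{T}_2$-continuity, and this applies along $\taua\subseteq\tausa\subseteq\tausa[c]\subseteq\taus$ from Theorem~\ref{thm:chain}.

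On (iii) you are right, and more careful than the paper, whose proof stops after (ii) and never addresses (iii). You can sharpen ``I cannot establish it unconditionally'' to ``it is false in general.'' Take any $J\in\mathcal{I}$. Since $X\in\tau(x)$ for every $x$ and $X\cap J=J\in\mathcal{I}$, we get $J^{*}=\emptyset$. Hence $\cls(J)=J$ and $X\setminus J\in\taus$.

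So if some $J\in\mathcal{I}$ is not $\tau$-closed, let $\sigma=\{\emptyset,X\setminus J,X\}$. The identity map $(X,\tau)\to(X,\sigma)$ is then $\taus$-continuous but not $\tau$-continuous. A concrete instance is $X=\{a,b\}$ with $\tau$ indiscrete, $\mathcal{I}=\{\emptyset,\{b\}\}$ and $\aura(a)=\aura(b)=X$. There $\{a\}$ lies in $\taus$ (indeed in $\tausa$) but not in $\tau$.

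The same computation shows your sufficient condition is also necessary. The following are equivalent:
\begin{enumerate}[label=(\alph*)]
    \item (iii) holds for every $f$ and every $(Y,\sigma)$;
    \item $\taus\subseteq\tau$ (equivalently $\taus=\tau$);
    \item every member of $\mathcal{I}$ is $\tau$-closed.
\end{enumerate}
For (a)$\Rightarrow$(b), apply (iii) to $\mathrm{id}\colon(X,\tau)\to(X,\taus)$. Your basis argument gives (c)$\Rightarrow$(b).

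The correct form of (iii) is therefore either your conditional statement under (c), or the reverse implication ($\tau$-continuous $\Rightarrow$ $\taus$-continuous), which follows from $\tau\subseteq\taus$.
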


\begin{proof}
These follow directly from the topology chain $\taua \subseteq \tausa \subseteq \taus$ (Theorem~\ref{thm:chain}), noting that coarser source topology makes continuity easier: if $f: (X, \tau_1) \to (Y, \sigma)$ is continuous and $\tau_2 \subseteq \tau_1$, then $f: (X, \tau_2) \to (Y, \sigma)$ need not be continuous.

Correction: The topology chain gives $\taua \subseteq \tausa \subseteq \taus$. A function $f$ is $\tau_1$-continuous if $f^{-1}(V) \in \tau_1$ for all $V \in \sigma$. If $\tau_1 \subseteq \tau_2$, then $\tau_2$-continuous implies $\tau_1$-continuous (since $f^{-1}(V) \in \tau_2 \supseteq \tau_1$... no, this is the wrong direction).

Let us reconsider. $f$ is $\taus$-continuous means $f^{-1}(V) \in \taus$ for all $V \in \sigma$. Since $\tausa \subseteq \taus$, $\taus$-continuity does NOT imply $\tausa$-continuity.

The correct statement is: since $\taua \subseteq \tausa$, we have $f^{-1}(V) \in \taua \implies f^{-1}(V) \in \tausa$, so $\taua$-continuous implies $\tausa$-continuous. Similarly, $\tausa$-continuous implies $\taus$-continuous.
\end{proof}

\section{Special Cases and Comparisons}\label{sec:special}

We examine three important special cases that connect the ideal-aura framework to known theories.

\subsection{Case 1: Trivial Ideal $\mathcal{I} = \{\emptyset\}$}\label{subsec:trivial}

\begin{proposition}\label{prop:trivial-ideal}
When $\mathcal{I} = \{\emptyset\}$:
\begin{enumerate}[label=(\roman*)]
    \item $A^{\aura}(\{\emptyset\}) = \cla(A) = \{x \in X : \aura(x) \cap A \neq \emptyset\}$;
    \item $\clsa(A) = \cla(A)$ (the aura-closure from \cite{Acikgoz2026aura});
    \item $\tausa = \taua$;
    \item The $\mathcal{I}\aura$-generalized open sets coincide with the $\aura$-generalized open sets of \cite{Acikgoz2026aura}.
\end{enumerate}
\end{proposition}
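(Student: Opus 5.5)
The four items will be handled in order, each reducing to the previous one and to earlier results.

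For (i), I invoke Theorem~\ref{thm:aura-local-properties}(v) directly. Unwound: with $\mathcal{I}=\{\emptyset\}$, the condition $\aura(x)\cap A\notin\mathcal{I}$ says exactly $\aura(x)\cap A\neq\emptyset$, which is the defining condition of $\cla(A)$ in Definition~\ref{def:aura-ops}.

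For (ii), I write $\clsa(A)=A\cup A^{\aura}=A\cup\cla(A)$ using (i). It remains to show $A\subseteq\cla(A)$. This holds because every $x\in A$ satisfies $x\in\aura(x)\cap A$, so $\aura(x)\cap A\neq\emptyset$. Hence $\clsa(A)=\cla(A)$.

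For (iii), since $\clsa=\cla$, the iterates $\clsa[n]$ coincide with the iterates of $\cla$. The key point is whether $\cla$ is idempotent; it need not be when $\aura$ is not transitive, and this is the main subtlety. So I will not identify $\tausa$ with $\taua$ by a closure-equality argument alone. Instead I compare open sets directly.

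\emph{Inclusion $\taua\subseteq\tausa$.} This is Theorem~\ref{thm:chain}.

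\emph{Inclusion $\tausa\subseteq\taua$.} Let $G\in\tausa$ and $F=X\setminus G$. Then $\clsa[\infty](F)=F$, so in particular $\clsa(F)=F$, i.e.\ $F^{\aura}\subseteq F$. Take $x\in G$. Then $x\notin F^{\aura}$, so $\aura(x)\cap F\in\mathcal{I}=\{\emptyset\}$. Hence $\aura(x)\subseteq G$, so $G\in\taua$. The same argument shows $\tausa[c]=\taua$ as well. This step uses only $\clsa(F)=F$, so non-idempotency causes no trouble.

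For (iv), I observe that every $\mathcal{I}\aura$-generalized open class in Definition~\ref{def:Ia-open-sets} is defined purely through $\clsa$ and $\intsa$. By (ii), $\clsa=\cla$. Then by Remark~\ref{rem:intsa},
\[
\intsa(A)=\{x\in A:\aura(x)\setminus A=\emptyset\}=\inta(A).
\]
Substituting these identities term by term shows each defining inclusion is the corresponding $\aura$-version from \cite{Acikgoz2026aura}, assuming those are defined with $\cla$ and $\inta$ in the same pattern. The only risk there is matching definitions with the cited paper, not any mathematical difficulty.
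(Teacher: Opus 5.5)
Your proposal is correct and follows the same route as the paper, which unwinds (i) directly from the definition and declares (ii)--(iv) immediate. You supply the details the paper omits: $A \subseteq \cla(A)$ because $x \in \aura(x)$, the fixed-point comparison of $\tausa$ with $\taua$, and $\intsa = \inta$ for the trivial ideal. Your caution about the non-idempotency of $\cla$ is harmless but unnecessary: any set fixed by $\clsa$ is fixed by every iterate $\clsa[n]$, so the plain closure-equality argument $\clsa = \cla$ already gives $\tausa = \taua$.
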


\begin{proof}
Part (i): $\aura(x) \cap A \notin \{\emptyset\}$ iff $\aura(x) \cap A \neq \emptyset$. Parts (ii)--(iv) follow immediately.
\end{proof}

\subsection{Case 2: Ideal of Finite Sets $\mathcal{I} = \mathcal{I}_f$}\label{subsec:finite}

\begin{proposition}\label{prop:finite-ideal}
When $\mathcal{I} = \mathcal{I}_f$ (ideal of finite subsets of $X$):
\begin{enumerate}[label=(\roman*)]
    \item $A^{\aura}(\mathcal{I}_f) = \{x \in X : \aura(x) \cap A \text{ is infinite}\}$;
    \item For finite $X$, $A^{\aura}(\mathcal{I}_f) = \emptyset$ for all $A$, and $\tausa = \powerset(X)$;
    \item For $X = \R$ with the usual topology and $\aura_\varepsilon(x) = (x - \varepsilon, x + \varepsilon)$, $A^{\aura_\varepsilon}(\mathcal{I}_f)$ consists of all points $x$ such that $(x-\varepsilon, x+\varepsilon) \cap A$ is infinite. For any infinite $A$ with no isolated points, $A^{\aura_\varepsilon}(\mathcal{I}_f) = \operatorname{cl}_{\aura_\varepsilon}(A)$.
\end{enumerate}
\end{proposition}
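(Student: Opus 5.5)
Parts (i) and (ii) are unwinding definitions and citing results already established; part (iii) needs a short real-line argument.

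For (i), by Definition~\ref{def:aura-local} a point $x$ lies in $A^{\aura}(\mathcal{I}_f)$ exactly when $\aura(x) \cap A \notin \mathcal{I}_f$. Since $\mathcal{I}_f$ is the ideal of finite sets, this says precisely that $\aura(x) \cap A$ is infinite.

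For (ii), when $X$ is finite every subset of $X$ is finite, so $\mathcal{I}_f = \powerset(X)$. Theorem~\ref{thm:aura-local-properties}(vi) then gives $A^{\aura} = \emptyset$ for every $A$. Consequently $\clsa(A) = A$, and by induction $\clsa[n](A) = A$ for all $n$, so $\clsa[\infty](A) = A$. Every subset is therefore $\clsa[\infty]$-closed, every complement lies in $\tausa$, and $\tausa = \powerset(X)$. This is exactly Corollary~\ref{cor:chain-equalities}(ii), which I would cite directly.

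For (iii), the description of $A^{\aura_\varepsilon}(\mathcal{I}_f)$ is (i) specialised to $\aura_\varepsilon(x) = (x-\varepsilon, x+\varepsilon)$. For the equality with $\operatorname{cl}_{\aura_\varepsilon}(A)$ I would invoke Theorem~\ref{thm:aura-local-vs-closure}(iii). It then suffices to check, for each $x \in \R$, that if $(x-\varepsilon,x+\varepsilon) \cap A \neq \emptyset$ then this intersection is infinite. The argument has three steps:
\begin{itemize}
\item Pick $y \in (x-\varepsilon,x+\varepsilon) \cap A$. Because the interval is open, there is $\delta>0$ with $(y-\delta,y+\delta) \subseteq (x-\varepsilon,x+\varepsilon)$.
\item Since $A$ has no isolated points, $y$ is an accumulation point of $A$. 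Suppose, for contradiction, that $(y-\delta,y+\delta)\cap A$ were finite. Shrinking $\delta$ below the minimum distance from $y$ to the finitely many other points of $A$ in that interval gives a neighbourhood of $y$ meeting $A$ only in $\{y\}$. Then $y$ would be isolated in $A$, a contradiction.
\item Hence $(y-\delta,y+\delta) \cap A$ is infinite, and so is the larger set $(x-\varepsilon,x+\varepsilon)\cap A$.
\end{itemize}

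The only step with any content is the second bullet, which turns ``not isolated'' into ``infinitely many points nearby''. It uses only that $\R$ is $T_1$ (indeed Hausdorff), and I would state it in that generality.
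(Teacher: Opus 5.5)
Your proposal is correct and follows essentially the same route as the paper. For (i) and (ii) you unwind the definition, using $\mathcal{I}_f = \powerset(X)$ when $X$ is finite. For (iii) you pick $y \in (x-\varepsilon,x+\varepsilon)\cap A$ and use that a non-isolated point of $A$ has infinitely many points of $A$ in every neighbourhood. You additionally make explicit two steps the paper leaves implicit: the argument that a finite intersection would make $y$ isolated, and the $\clsa[\infty]$ iteration in (ii). The auxiliary $\delta$ is unnecessary, since $(x-\varepsilon,x+\varepsilon)$ is itself an open neighbourhood of $y$.
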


\begin{proof}
\textbf{(i)} is the definition. \textbf{(ii)} follows because every subset of a finite set is finite, hence in $\mathcal{I}_f$. \textbf{(iii)} If $A$ has no isolated points and $(x-\varepsilon, x+\varepsilon) \cap A \neq \emptyset$, then for any $y \in (x-\varepsilon, x+\varepsilon) \cap A$, every open interval around $y$ meets $A$ in infinitely many points (since $A$ has no isolated points), so $(x-\varepsilon, x+\varepsilon) \cap A$ is infinite.
\end{proof}

\subsection{Case 3: Improper Ideal $\mathcal{I} = \powerset(X)$}\label{subsec:improper}

\begin{proposition}\label{prop:improper-ideal}
When $\mathcal{I} = \powerset(X)$:
\begin{enumerate}[label=(\roman*)]
    \item $A^{\aura} = \emptyset$ for all $A$;
    \item $\clsa(A) = A$ for all $A$ (identity operator);
    \item $\tausa = \powerset(X)$ (discrete topology).
\end{enumerate}
\end{proposition}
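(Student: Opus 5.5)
The plan is to derive all three parts directly from the definitions, leaning on Theorem~\ref{thm:aura-local-properties}(vi) and Corollary~\ref{cor:chain-equalities}(ii), and then handle the topology in both of its versions, $\tausa$ and $\tausa[c]$.

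For part (i), fix $A \subseteq X$ and $x \in X$. Since $\mathcal{I} = \powerset(X)$, the set $\aura(x) \cap A$ belongs to $\mathcal{I}$. Hence $x \notin A^{\aura}$, and so $A^{\aura} = \emptyset$. This is exactly Theorem~\ref{thm:aura-local-properties}(vi), which I will simply cite.

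For part (ii), I substitute (i) into Definition~\ref{def:clsa}. This gives $\clsa(A) = A \cup \emptyset = A$, so $\clsa$ is the identity operator.

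For part (iii), I take an arbitrary $G \subseteq X$ and set $F = X \setminus G$.
\begin{enumerate}[label=(\alph*)]
    \item By (ii), $\clsa(F) = F$, so $G \in \tausa[c]$.
    \item For the iterative topology, I show by induction that $\clsa[n](F) = F$ for every $n$. The base case is $\clsa[0](F) = F$. The step is $\clsa[n+1](F) = \clsa(F) = F$, using (ii).
    \item Therefore $\clsa[\infty](F) = \bigcup_n \clsa[n](F) = F$, which gives $G \in \tausa$.
\end{enumerate}
Since $G$ was arbitrary, $\tausa = \tausa[c] = \powerset(X)$, the discrete topology.

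As a cross-check, the same conclusion also follows from the chain in Theorem~\ref{thm:chain}. Here every $A$ satisfies $A^{*} = \emptyset$, so $\taus = \powerset(X)$, and the inclusion $\tausa \subseteq \taus$ is consistent with this. The chain alone does not give equality, however, so the direct computation in (iii) is the real argument.

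There is no genuine obstacle. The only point needing care is to treat both closure-generated topologies, $\tausa$ via $\clsa[\infty]$ and $\tausa[c]$ via $\clsa$. The induction showing that the iterates of the identity remain the identity handles this, and it avoids any appeal to the flawed countable-union step in Theorem~\ref{thm:iterative-kuratowski}.
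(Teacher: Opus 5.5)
Your proof is correct and takes the paper's route. The paper gives no separate proof of this proposition; Corollary~\ref{cor:chain-equalities}(ii) obtains the same three facts from Theorem~\ref{thm:aura-local-properties}(vi), just as you do. Your induction showing $\clsa[n](F) = F$ for all $n$ spells out the step from $\clsa$ being the identity to $\tausa$ being discrete, which the paper leaves implicit.
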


\subsection{Relationship Diagram}\label{subsec:diagram}

The following diagram summarizes the relationships among the topologies and closure operators:

\begin{center}
\begin{tikzpicture}[
    node distance=2cm and 3cm,
    every node/.style={font=\small},
    box/.style={rectangle, draw, rounded corners, minimum width=2.5cm, minimum height=0.8cm, align=center},
    >=Stealth
]

\node[box] (ta) {$\taua$};
\node[box, right=of ta] (tsa) {$\tausa$};
\node[box, right=of tsa] (ts) {$\taus$};
\node[box, above=1.2cm of ta] (I0) {$\mathcal{I} = \{\emptyset\}$:\\$\tausa = \taua$};
\node[box, above=1.2cm of ts] (IPX) {$\mathcal{I} = \powerset(X)$:\\$\tausa = \powerset(X)$};

\draw[->] (ta) -- node[above] {$\subseteq$} (tsa);
\draw[->] (tsa) -- node[above] {$\subseteq$} (ts);
\draw[->, dashed] (I0) -- (ta);
\draw[->, dashed] (IPX) -- (ts);

\end{tikzpicture}
\end{center}

\section{Compatibility with Generalized Open Sets}\label{sec:compat}

In this section, we investigate how the $\mathcal{I}\aura$-generalized open sets relate to the classical $\mathcal{I}$-generalized open sets and the pure $\aura$-generalized open sets from \cite{Acikgoz2026aura}.

\begin{theorem}\label{thm:Ia-vs-I}
For any $\mathcal{I}\aura$-space $(X, \tau, \mathcal{I}, \aura)$ and any $A \subseteq X$:
\begin{enumerate}[label=(\roman*)]
    \item If $A$ is $\mathcal{I}$-open (i.e., $A \subseteq \inte(\cls(A))$), it need not be $\mathcal{I}\aura$-open.
    \item If $A$ is $\mathcal{I}\aura$-open (i.e., $A \subseteq \intsa(A)$), it need not be $\mathcal{I}$-open.
    \item If $A$ is $\aura$-open (i.e., $A \in \taua$), then $A$ is $\mathcal{I}\aura$-open.
\end{enumerate}
\end{theorem}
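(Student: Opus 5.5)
Part (iii) is a genuine implication, and I would prove it first. Let $A \in \taua$, so $\aura(x) \subseteq A$ for every $x \in A$. By Remark~\ref{rem:intsa}, $\intsa(A) = \{x \in A : \aura(x) \setminus A \in \mathcal{I}\}$. For $x \in A$ we have $\aura(x) \setminus A = \emptyset \in \mathcal{I}$. Hence $\intsa(A) = A$, and $A$ is $\mathcal{I}\aura$-open. Alternatively, one can quote the first step of the proof of Theorem~\ref{thm:chain}, which gives $(X\setminus A)^{\aura} \subseteq X \setminus A$ and thus $\clsa(X\setminus A) = X\setminus A$.

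Parts (i) and (ii) are non-implications, so each needs a finite counterexample. I would build both on $X=\{a,b,c\}$ with explicit $\tau$, $\mathcal{I}$ and $\aura$, reusing the style of Example~\ref{ex:chain-strict}.

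For (ii), I need a set $A$ with $\aura(x)\setminus A \in \mathcal{I}$ for all $x\in A$ but $A \not\subseteq \inte(\cls(A))$. The natural source is a point $x\in A$ whose only $\tau$-neighbourhoods are large, while its aura escapes $A$ only by an ideal set.

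\textbf{Obstacle for (ii).} Since $\aura(x)$ is itself $\tau$-open, the mechanism must be different. I would aim for $\cls(A)$ failing to contain an open neighbourhood of $x$, while $\aura(x)\setminus A$ is nonempty but lies in $\mathcal{I}$. A first attempt is the indiscrete topology on $X=\{a,b,c\}$ with $\mathcal{I}=\{\emptyset,\{b\}\}$, $\aura \equiv X$, and $A=\{a,c\}$.
\begin{itemize}
\item $\mathcal{I}\aura$-open: $\aura(x)\setminus A=\{b\}\in\mathcal{I}$ for each $x \in A$, so $A$ is $\mathcal{I}\aura$-open.
\item $\mathcal{I}$-open: $A^{*}=X$ because $X\cap A\notin\mathcal{I}$. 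Then $\cls(A)=X$ and $\inte X = X$, so $A$ is $\mathcal{I}$-open, and the attempt fails.
\end{itemize}
The fix is to let $b\notin A$ carry the ideal while $\cls(A)$ misses $b$. That cannot happen if $b\in A^{*}$, so I need $\tau$ to separate. I would take $\tau=\{\emptyset,\{a,b\},X\}$ (or similar), $\aura(a)=\{a,b\}$, $\mathcal{I}=\{\emptyset,\{b\}\}$ and $A=\{a\}$.
\begin{itemize}
\item $\mathcal{I}\aura$-open: $\aura(a)\setminus A=\{b\}\in\mathcal{I}$.
\item $\cls(A)$: $b$ has neighbourhood $\{a,b\}$ with $\{a,b\}\cap A=\{a\}\notin\mathcal{I}$, so $b\in A^*$. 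Likewise $c$ lies only in $X$ and $X \cap A \notin \mathcal{I}$, so $c\in A^*$.
\end{itemize}
Again $\cls(A)=X$, so this also fails. The example therefore needs a point of $\cls(A)$ that is not $\tau$-interior, for instance by shrinking $A^*$ with a larger ideal.

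My plan is a short systematic search over small $\tau$ and ideals containing the escape set. The aim is to make $\cls(A)$ equal to $A$ plus ideal points, with $A$ not $\tau$-open. One candidate is $\tau$ generated by $\{a,b\}$ and $\{c\}$, $\mathcal{I}=\{\emptyset,\{b\}\}$, $\aura(a)=\{a,b\}$, $\aura(c)=\{c\}$ and $A=\{a,c\}$. Here $\cls(A)=X$ would make $A$ $\mathcal{I}$-open, so the search continues. I expect this counterexample hunting to be the main effort.

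For (i), I take $\tau$ discrete, so every set is $\mathcal{I}$-open since $\inte$ is the identity, together with a non-transitive or large aura. Take $X=\{a,b\}$, $\aura(a)=X$, $\mathcal{I}=\{\emptyset\}$ and $A=\{a\}$. Then $\aura(a)\setminus A=\{b\}\notin\mathcal{I}$, so $\intsa(A)=\emptyset$ and $A$ is not $\mathcal{I}\aura$-open.
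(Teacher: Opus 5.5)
Your part (iii) is correct and has the same content as the paper's proof. The paper cites $\taua \subseteq \tausa$ from Theorem~\ref{thm:chain}, whose first step is exactly your observation that $\aura(x)\setminus A=\emptyset$ for $x\in A$. Your counterexample for (i) is also correct; you only need to fix some $\aura(b)\ni b$, e.g.\ $\aura(b)=\{b\}$. It is in fact more than the paper provides, since the paper's treatment of (i) and (ii) is a heuristic remark with no explicit space.

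\textbf{Gap in (ii).} Part (ii) is not proved. Your three candidates all fail, as your computations correctly show, and the proposal ends with ``the search continues.'' What is missing is a mechanism that hides the escape set from $\cls$.

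For $x\in A$ to lie outside $\inte(\cls(A))$ while $\aura(x)\setminus A\in\mathcal{I}$, the open set $\aura(x)$ must contain a point $y\notin A\cup A^{*}$. So $y$ needs its own open neighbourhood $O$ with $O\cap A\in\mathcal{I}$, while every neighbourhood of $x$ still contains such a point. In all your attempts the escape point $b$ had only neighbourhoods containing $a$, which forces $b\in A^{*}$.

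A concrete example: take $X=\{a,b\}$, $\tau=\{\emptyset,\{b\},X\}$, $\mathcal{I}=\{\emptyset,\{b\}\}$, $\aura(a)=X$, $\aura(b)=\{b\}$, and $A=\{a\}$.
\begin{itemize}
\item $\aura(a)\setminus A=\{b\}\in\mathcal{I}$, so $\intsa(A)=A$ and $A$ is $\mathcal{I}\aura$-open.
\item $\{b\}\cap A=\emptyset\in\mathcal{I}$ gives $b\notin A^{*}$, so $\cls(A)=\{a\}$ and $\inte(\cls(A))=\emptyset$. Hence $A$ is not $\mathcal{I}$-open.
\end{itemize}

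Alternatively, your own suggestion of enlarging the ideal works directly. If $A\in\mathcal{I}$, then $A^{*}=\emptyset$ and $\cls(A)=A$, so $A$ is $\mathcal{I}$-open only if it is $\tau$-open. With $\mathcal{I}=\powerset(X)$, every set is $\mathcal{I}\aura$-open. Therefore any non-open set in a non-discrete space, such as $A=\{a\}$ in the indiscrete two-point space, is a counterexample.
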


\begin{proof}
\textbf{(iii)} If $A \in \taua$, then $A \in \tausa$ (Theorem~\ref{thm:chain}), so $A \subseteq \intsa(A)$.

\textbf{(i) and (ii)} follow by counterexample. Since $\intsa$ uses the aura-local function while the classical $\mathcal{I}$-interior uses the standard interior and $*$-closure, the two notions are generally independent. An $\mathcal{I}$-open set depends on $\inte$ and $\cls$ (which involve $\tau$ and $\mathcal{I}$), while an $\mathcal{I}\aura$-open set depends on $\aura$ and $\mathcal{I}$. Different choices of $\aura$ can make the relationship go either way.
\end{proof}

\begin{theorem}\label{thm:reduction-diagram}
The following reduction diagram holds:
\[
\begin{array}{ccc}
\text{$\aura$-open} & \xrightarrow{\;\;\subseteq\;\;} & \text{$\mathcal{I}\aura$-open} \\
& & \\
\text{(Paper I \cite{Acikgoz2026aura})} & & \text{(This paper)}
\end{array}
\]
Specifically, every $\aura$-open set is $\mathcal{I}\aura$-open, and when $\mathcal{I} = \{\emptyset\}$, the two notions coincide.
\end{theorem}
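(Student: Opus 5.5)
The statement has two parts. First, every $\aura$-open set is $\mathcal{I}\aura$-open. Second, when $\mathcal{I}=\{\emptyset\}$ the two notions coincide. I will prove each directly from the description of $\intsa$ in Remark~\ref{rem:intsa}, namely
\[
\intsa(A)=\{x\in A:\aura(x)\setminus A\in\mathcal{I}\},
\]
so that $A$ is $\mathcal{I}\aura$-open exactly when $\aura(x)\setminus A\in\mathcal{I}$ for every $x\in A$. This avoids routing through $\tausa$ and the iterated closure $\clsa[\infty]$. That matters because, for non-transitive $\aura$, $\mathcal{I}\aura$-openness is membership in $\tausa[c]$ rather than in $\tausa$.

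\textbf{Inclusion.} Let $A\in\taua$, so $\aura(x)\subseteq A$ for every $x\in A$. Then $\aura(x)\setminus A=\emptyset\in\mathcal{I}$ for each $x\in A$, since every ideal contains $\emptyset$. Hence $A\subseteq\intsa(A)$. Alternatively, one may cite the first step of the proof of Theorem~\ref{thm:chain}, which shows $\clsa(X\setminus A)=X\setminus A$ for $A\in\taua$; this gives $\intsa(A)=A$ directly. This is also Theorem~\ref{thm:Ia-vs-I}(iii).

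\textbf{Coincidence for $\mathcal{I}=\{\emptyset\}$.} Suppose $A$ is $\mathcal{I}\aura$-open. Then for each $x\in A$ we have $\aura(x)\setminus A\in\{\emptyset\}$, which means $\aura(x)\subseteq A$. So $A\in\taua$, matching the definition of $\inta$ in Definition~\ref{def:aura-ops}. Equivalently, Proposition~\ref{prop:trivial-ideal}(ii) gives $\clsa=\cla$. It follows that $\intsa(A)=X\setminus\cla(X\setminus A)=\inta(A)$, and the two openness conditions are literally the same.

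The only point needing care, and hence the main obstacle, is using the correct notion of $\mathcal{I}\aura$-open. It is the one-step condition $A\subseteq\intsa(A)$, not membership in $\tausa$. For that notion the argument is a pointwise check and needs no transitivity hypothesis.
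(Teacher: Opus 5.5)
Your proof is correct and is essentially the paper's argument. The inclusion is Theorem~\ref{thm:Ia-vs-I}(iii), which routes through $\taua\subseteq\tausa$, whose proof rests on exactly your observation that $\aura(x)\subseteq A$ forces $\aura(x)\setminus A=\emptyset\in\mathcal{I}$; the coincidence for $\mathcal{I}=\{\emptyset\}$ is Proposition~\ref{prop:trivial-ideal} via $\clsa=\cla$; you have simply unpacked both through Remark~\ref{rem:intsa}. One correction to your framing: the detour through $\tausa$ would be harmless, because a fixed point of $\clsa$ is fixed by every iterate, and conversely $F\subseteq\clsa(F)\subseteq\clsa[\infty](F)$, so $\tausa=\tausa[c]$ for every $\aura$, transitive or not.
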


\section{Conclusion and Open Problems}\label{sec:conclusion}

In this paper, we introduced the ideal-aura topological space $(X, \tau, \mathcal{I}, \aura)$ and developed a systematic theory based on the aura-local function $A^{\aura}(\mathcal{I})$. The main contributions are:

\begin{enumerate}
    \item We defined the aura-local function $A^{\aura}(\mathcal{I}) = \{x \in X : \aura(x) \cap A \notin \mathcal{I}\}$ and established the fundamental inclusion $A^{*}(\mathcal{I}, \tau) \subseteq A^{\aura}(\mathcal{I})$. The aura-local function inherits the key algebraic properties (monotonicity, finite additivity) of the classical Jankovi\'{c}--Hamlett local function but differs crucially in that $A^{\aura}$ need not be closed.

    \item The closure operator $\clsa(A) = A \cup A^{\aura}$ was shown to be an additive \v{C}ech closure operator. Non-idempotency was demonstrated with explicit counterexamples, and idempotency was shown to hold precisely when $\aura$ is transitive (Theorem~\ref{thm:transitive-idempotent}).

    \item The topology chain $\taua \subseteq \tausa \subseteq \taus$ was established, providing a natural interpolation between the pure aura topology and the Jankovi\'{c}--Hamlett topology. The $\psia$-operator was introduced and shown to characterize $\tausa$ via $A \in \tausa \iff A \subseteq \psia(A)$ (Theorem~\ref{thm:psia-characterization}).

    \item A basis $\beta_{\aura}(\mathcal{I}) = \{\aura(x) \setminus J : x \in X, J \in \mathcal{I}\}$ for $\tausa$ was constructed for transitive aura functions (Theorem~\ref{thm:basis}).

    \item Five classes of $\mathcal{I}\aura$-generalized open sets were introduced and their hierarchy was established. Decomposition theorems for $\mathcal{I}\aura$-continuity were proven.

    \item Three special cases were analyzed, showing that $\tausa$ reduces to $\taua$ when $\mathcal{I} = \{\emptyset\}$, to $\powerset(X)$ when $\mathcal{I} = \powerset(X)$, and exhibiting localized behavior for $\mathcal{I} = \mathcal{I}_f$.
\end{enumerate}

The following problems remain open:

\begin{question}\label{q:countable}
For general (non-transitive) $\aura$, at what ordinal $\alpha$ does the iteration $\clsa[\alpha]$ stabilize? Is countable ordinal always sufficient?
\end{question}

\begin{question}\label{q:product}
Does the ideal-aura topology behave well under products? That is, given $(X, \tau, \mathcal{I}, \aura)$ and $(Y, \sigma, \mathcal{J}, \mathfrak{b})$, what is the relationship between $(\tau \times \sigma)^{*}_{\aura \times \mathfrak{b}}$ and $\tausa \times \sigma^{*}_{\mathfrak{b}}$?
\end{question}

\begin{question}\label{q:compactness}
Define $\mathcal{I}\aura$-compactness. Does an $\mathcal{I}\aura$-compact subset of an $\mathcal{I}\aura$-$T_2$ space have to be $\clsa$-closed?
\end{question}

\begin{question}\label{q:compatible}
For which ideals $\mathcal{I}$ is $\tausa = \taua$ (i.e., the ideal adds no new open sets)?
\end{question}

\begin{question}\label{q:codomain}
Can the ideal-aura framework be extended to include an ideal on the codomain, producing $(\mathcal{I}, \mathcal{J})$-$\aura$-continuity?
\end{question}

\section*{Conflict of Interest}

The author declares no conflict of interest.

\section*{Data Availability}

No data was used for the research described in the article.

\end{document}